\documentclass[12pt]{amsart}

\usepackage{amssymb,latexsym}
\usepackage{graphicx}
\usepackage{stmaryrd}
\usepackage[pdfusetitle]{hyperref}
\usepackage{color}
\usepackage{soul,xcolor}
\usepackage{chngcntr}


\def \diam {{\rm diam}}

\def \dist {{\rm dist}}
\def \rdist{{\rm rdist}}

\def\cal{\mathcal}
\def \rdist {{\rm rdist}}
\def \ecc {{\rm ecc}}
\def \ec {{\rm ecc}}
\def \BMO {{\rm BMO}}
\def \CMO {{\rm CMO}}
\def\R{{\mathbb R}}
\def\S{{\mathcal S}}

\def \sign {{\rm sign}}

\newcommand{\N}{\mathbb{N}}
\def\N{\mathbb{N}}
\newcommand{\supp}{\mathrm{supp \;}}

\newcommand{\norm}[1]{\|#1\|}
\newcommand{\abs}[1]{|#1|}
\newcommand{\inner}[2]{\left\langle #1,#2 \right\rangle}

\addtolength{\headheight}{3.2pt}    

\newtheorem{theorem}{Theorem}[section]
\newtheorem{definition}[theorem]{Definition}
\newtheorem{lemma}[theorem]{Lemma}

\newtheorem{proposition}[theorem]{Proposition}

\newtheorem{example}[theorem]{Example}
\newtheorem{remark}[theorem]{Remark}

\address{Centre for Mathematical Sciences, University of Lund, Lund, Sweden}
\email{janfreol@maths.lth.se}
\email{paco.villarroya@maths.lth.se}

\author{Jan-Fredrik Olsen}
\author{Paco Villarroya}
\date{}
\title[\tiny Endpoint estimates for compact Calder\'on-Zygmund operators]{Endpoint estimates for compact Calder\'on-Zygmund operators}
\keywords{Compact singular integral operators, $T(1)$ Theory}
\thanks{The second author has been partially supported by Spanish project MTM2011-23164}

\begin{document}

\date{\today}

\begin{abstract}
We prove necessary and sufficient conditions for a Calder\'on-Zygmund operator to be compact at the endpoint from $L^{1}(\mathbb R^{d})$ into $L^{1,\infty}(\mathbb R^{d})$.  
\end{abstract}

\maketitle

\section{Introduction}
The paper \cite{V1} introduced a new $T(1)$ Theory to study compactness of singular integral operators. Its main result provided necessary and sufficient conditions for operators associated with classical Calder\'on-Zygmund kernels to be compact on $L^{p}(\mathbb R)$ for all $1<p<\infty $. 
This characterization was expressed in terms of three conditions: the decay of the derivative of the kernel along the direction of
the diagonal, 
an appropriate 'weak compactness condition', 
and the membership of properly constructed $T(1)$ and $T^{*}(1)$ functions to the 
space $\CMO(\mathbb R)$. Here, the latter space is defined as the 
closure in $\BMO(\mathbb R)$ of the space of continuous functions vanishing at infinity.
Later, in \cite{PPV}, the endpoint case of compactness from $L^{\infty}(\mathbb R)$ into $\CMO(\mathbb R)$ was obtained. 

We note that, although the results in the two above-mentioned papers were proven in the context of 
functions defined on $\mathbb{R}$, the results and techniques developed also hold in the multi-dimensional setting. See, for instance, the preprint \cite{V2} 
which contains the proof of a global $T(b)$ theorem for compactness of singular integrals in $\mathbb R^{d}$. 

A natural question is whether one can obtain the two remaining 
 endpoint results, namely, 
compactness from $H^{1}(\mathbb R^{d})$ into $L^{1}(\mathbb R^{d})$
and 
from $L^{1}(\mathbb R^{d})$ into $L^{1,\infty}(\mathbb R^{d})$.
A little bit of thought shows that the former case is an immediate consequence of \cite{PPV} and Schauder's Theorem, which states that an operator between two Banach spaces,  $T : X \rightarrow Y$, is compact if and only if the same holds true for $T^\ast : Y^\ast \rightarrow X^\ast$ (see e.g. \cite{Runde}). The point of this paper is to prove that the latter endpoint result also holds.

Although 
both results are the natural extensions of the classical endpoint theorems for boundedness, 
the method used to prove compactness from $L^1(\R^d)$ to $L^{1,\infty}(\R^d)$ is very different from the standard one. It is true that the demonstration follows the same general scheme and shares identical initial steps as 
in the proof of boundedness. 
However, the standard reasoning comes to a halt when applied to 
the orthogonal projection operator, 
which is an element completely absent in the classical proof. This difficulty forces one to perform the operator analysis in a different way, more in accordance with the ideas carried out to show compactness at the non-endpoint case \cite{V1}.   

Since the current project is the continuation of  \cite{V1}, we often cite this paper for detailed references about the notation and the definitions we use,
and also for proofs of those results that we merely state. And yet, we intend to 
present a paper as self-contained as possible.

\section{Definitions}

\subsection{Notation}

We say that   $I=\prod_{i=1}^{d}[a_{i},b_{i})$ is a cube in $\mathbb R^{d}$
if the quantity $|b_{i}-a_{i}|$ remains constant for all indices $i\in \{1,\ldots ,d\}$. 
For every cube $I\subset \mathbb R^{d}$ we denote its centre by $c(I)=(2^{-1}(a_{i}+b_{i}))_{i=1}^{d}$, its side length by $\ell(I)=|b_{i}-a_{i}|$, and
its volume by $|I|=\ell(I)^{d}$. For any $\lambda >0$, we denote by $\lambda I$ the cube such that $c(\lambda I)=c(I)$ and $|\lambda I|=\lambda^{d}|I|$. Accordingly, we also write $\mathbb B=\mathbb B^{d}=(-1/2,1/2)^{d}$ and $\mathbb B_{\lambda }=\lambda \mathbb B=(-\lambda/2,\lambda /2)^{d}$.

We denote by $|\cdot |_{p}$, with $0<p\leq \infty $, the $\ell^{p}$-norm in $\mathbb R^{d}$ and by $|\cdot |$ the modulus of a complex number. Hopefully, this notation will not cause any confusion with the one we use for the 
volume of a cube. 

Given two cubes $I,J\subset \mathbb R^{d}$,
we denote by $\langle I,J\rangle$ any cube with minimal side length containing $I\cup J$ and
write its side length by $\diam(I\cup J)$. If there is more than one cube satisfying these conditions, we will simply select one and refer to it as $\langle I,J\rangle$ regardless of the choice.

We note that if $I=\prod_{i=1}^{d}I_{i}$, $J=\prod_{i=1}^{d}J_{i}$, with 
$I_{i},J_{i}$ intervals in $\mathbb R$,  
we have $\diam(I\cup J)=\max_{i}\diam(I_{i}\cup J_{i})$, where $\diam(I_{i}\cup J_{i})$
is the length of $\langle I_{i},J_{i}\rangle $, the smallest interval containing $I_{i}$ and $J_{i}$. 
Therefore, we have the following equivalences
\begin{align*}
\diam(I\cup J) & \approx  \frac{\ell(I) + \ell(J)}{2}+|c(I)-c(J)|_{\infty } \\
& \approx \max (\ell(I),\ell(J) )+|c(I)-c(J)|_{\infty}.
\end{align*}

We also define
the relative distance between $I$ and $J$ by
$$\rdist(I,J)=\frac{\diam(I\cup J)}{\max(\ell(I),\ell(J))},$$ 
which is comparable to $\max(1,n)$ where $n$ is  
the smallest number of times the larger cube needs to be shifted a distance equal to its side length 
so that it contains the smaller one. 
Note that from the above, we have
\begin{align*}
\frac{1}{2}\Big(1+\frac{|c(I)-c(J)|_{\infty }}{\max(\ell(I),\ell(J))}\Big)\leq \rdist(I,J) \leq 1+\frac{|c(I)-c(J)|_{\infty }}{\max(\ell(I),\ell(J))}.
\end{align*}

We also define the eccentricity of $I$ and $J$ to be
$$
\ec(I,J)=\frac{\min(|I|,|J|)}{\max (|I|,|J|)}.
$$


Finally, 
we say that a cube $I$ is dyadic if $I=2^{j}\prod_{i=1}^{d}[k_{i},k_{i}+1)$ for some $j,k_{1}, \ldots ,k_{d}\in \mathbb Z$, and denote by ${\mathcal C}$ and ${\mathcal D}$ the families of all cubes and all dyadic cubes in $\mathbb R^{d}$, respectively.

\begin{definition} \label{Imdef}
For every $M\in \mathbb N$, we define ${\cal C}_{M}$ to be the family of all cubes in $\mathbb R^{d}$ such that
$2^{-M}\leq \ell(I)\leq 2^{M}$ and
$\rdist(I,\mathbb B_{2^{M}})\leq M$.  We also define ${\cal D}_{M}$ to be the intersection of ${\cal C}_{M}$ with ${\mathcal D}$. 

For every fixed $M$, we will call the cubes in ${\cal C}_{M}$ and ${\cal D}_{M}$   lagom\footnote{ `Lagom' is a  Swedish word with the following meanings: adequate, moderate, in balance, just right.} cubes and dyadic lagom cubes respectively.
\end{definition}

\begin{remark} \label{remark in section 2}
Note that $I\in {\cal C}_{M}$ implies that  $2^{-M}(2^{M}+|c(I)|_{\infty })\leq M$, and so
$|c(I)|_{\infty }\leq (M-1)2^{M}$.
Therefore, in this case, $I\subset \mathbb B_{M2^{M}}$ with
$2^{-M}\leq \ell(I)$.

On the other hand, $I\notin {\cal C}_{M}$ implies either $\ell(I)>2^{M}$ or
$\ell(I)<2^{-M}$, or $2^{-M}\leq \ell(I)\leq 2^{M}$ with $|c(I)|_{\infty }>(M-1)2^{M}$.
\end{remark}

\subsection{Compact Calder\'on-Zygmund kernels and associated operators}

We define the type of kernels that can be associated with compact operators. 
\begin{definition}\label{admissible}
Three bounded functions $L,S, D: [0,\infty )\rightarrow [0,\infty )$
constitute a set of admissible functions if 
the following limits hold
\begin{equation}\label{limits}
\lim_{x  \rightarrow \infty }L(x )
=\lim_{x \rightarrow 0}S(x )=\lim_{x  \rightarrow \infty }D(x )=0.
\end{equation}

\end{definition}

\begin{remark}\label{constants}
Since any fixed dilation of an admissible function  is again admissible, we will often omit all universal constants appearing in the argument of these functions.
\end{remark}

\begin{definition}\label{prodCZ}
A function $K:(\mathbb R^{d}\times \mathbb R^{d}) \setminus \{ (t,x)\in \mathbb R^{d}\times \mathbb R^{d}: t=x\}\to \mathbb C$ is called a
compact Calder\'on-Zygmund kernel if it is bounded in its domain and there exist
$0<\delta < 1$, $C>0$, and admissible functions $L,S,D$ such that
\begin{equation}\label{CCZ}
|K(t,x)-K(t',x')|
\le 
C\frac{(|t-t'|_{\infty }+|x-x'|_{\infty })^\delta}{|t-x|_{\infty }^{d+\delta}}F(t,x),
\end{equation}
whenever $2(|t-t'|_{\infty }+|x-x'|_{\infty })<|t-x|_{\infty }$, where 
$$
F(t,x)=L(|t-x|_{\infty })S(|t-x|_{\infty }) D(|t+x|_{\infty }).
$$ 

\end{definition}

We use the standard definition of multi-indices: $\alpha=(\alpha_{1},\ldots , \alpha_{d})\in \mathbb N^{d}$,
$|\alpha |=\sum_{i=1}^{d}\alpha_{i}$ and
$\partial^{\alpha}=\frac{\partial^{|\alpha |}}{\partial^{\alpha_{1}}_{x_{1}}\cdots \, \partial^{\alpha_{d}}_{x_{d}}}$.

\begin{definition}\label{SchwartzN} 
For every $N\in \mathbb N$, $N\geq 1$, we define $\S_{N}(\R^{d})$ to be the set of 
all functions 
$f\in {\mathcal C}^{N}(\mathbb R^{d})$
such that 
$$
\| f\|_{m,n}=\sup_{x\in \mathbb R^{d}}|x|^{\beta }|\partial^{\alpha}f(x)|<\infty
$$ 
for all $\alpha ,\beta \in \mathbb N^{d}$ with $|\alpha |,|\beta |\leq N$. Clearly, $\S_{N}(\R^{d})$ 
equipped with the family of seminorms $\| \cdot \|_{\alpha ,\beta }$
is a Fr\'echet space. Then,   
we can also define its dual space $\S_{N}(\R^{d})'$ equipped with the dual topology which turns out to be
a subspace of the space of multidimensional tempered distributions. We write $\S(\R^{d})$ for the classical Schwartz space.
\end{definition}

\begin{definition}\label{intrep}
Let $T:\S_{N}(\R^{d})\to \S_{N}(\R^{d})'$ be a linear operator which is continuous with respect to the topology 
of $\S_{N}(\R^{d})$ and the dual topology of  $\S_{N}(\R^{d})'$.

We say that 
$T$ is associated with a compact Calder\'on-Zygmund kernel $K$ if for all $f,g\in {\cal S_{N}(\R^{d})}$
with disjoint compact supports, 
the action of $Tf$ as a distribution satisfies the following 
integral representation
$$
\langle Tf, g\rangle =\int_{\R^{d}}\int_{\R^{d}} f(t)g(x) K(t, x)\, dt \, dx.
$$
\end{definition}

\subsection{The weak compactness condition}

\begin{definition}\label{adapted}
For $0<p\leq \infty $, we say that a function $\phi \in \S_{N}(\R^{d})$ is
an $L^p(\mathbb R^{d})$-normalized bump function adapted to $I$ with constant $C>0$ and order 
$N\in \mathbb N$ if, for all multi-indices $0\leq \abs{\alpha} \leq N$, it holds that
$$
|\partial^{\alpha}\phi(x)|\le \frac{C}{|I|^{1/p}\ell(I)^{|\alpha|}}  \bigg(1+ \frac{|x-c(I)|_{\infty }}{\ell(I)}\bigg)^{-N}.
$$
\end{definition}

Observe that, for $Np>d$, the bump functions in Definition \ref{adapted} are normalized to be uniformly bounded in $L^p(\mathbb R^{d})$. The order of the bump functions
will always be denoted by $N$, even though its value might change from line to line.
We will often use the greek letters $\phi $, $\varphi $ for general bump functions while
we reserve the use of $\psi $ to denote bump functions with mean zero.
If not otherwise stated, we will usually assume that the bump functions are $L^2(\mathbb R^{d})$-normalized.

We now state the weak compactness condition. 
\begin{definition}\label{WB}
A linear operator $T : \S_{N}(\R^{d}) \to \S_{N}(\R^{d})'$ satisfies the weak compactness condition if 
there exist admissible functions $L, S, D$ 
such that: for every $\epsilon>0$ there 
exists $M\in \mathbb N$ so that for any cube $I$ and every pair $\phi_I, \varphi_I$ of
$L^2$-normalized bump functions adapted to $I$ with constant $C>0$ and order $N$, we have
$$
|\langle T\phi_I,\varphi_I)\rangle |\lesssim C \bigg(L\Big(\frac{\ell(I)}{2^M}\Big) \cdot S\Big(2^{M}\ell(I)\Big) \cdot
D\Big(\frac{\rdist\big(I,\mathbb B_{2^{M}})}{M}\Big)+\epsilon \bigg),
$$
where the implicit constant only depends on the operator $T$.

\end{definition}

There are other alternative and less technical formulations of this concept. For example, we can say 
that 
$T$ satisfies the weak compactness condition if and only if, for every pair $\phi_I, \varphi_I$ of
$L^2$-normalized bump functions adapted to $I$, we have
\begin{align*}
\lim_{M\rightarrow \infty }\sup_{I\notin {\mathcal D}_{M}}|\langle T\phi_I,\varphi_I)\rangle |=0, 
\end{align*}
where the lagom dyadic cubes  ${\mathcal D}_{M}$  
appear in Definition \ref{Imdef}. 
However, we prefer the formulation used in Definition \ref{WB} because
it is particularly well-suited for the calculations performed in \cite{V1} and thus, the ones carried out in the current paper. 

We introduce the following notation to simplify otherwise cumbersome formulas, which appear both in the statement of Proposition \ref{symmetricspecialcancellation} and in the proof of Theorem \ref{theoweakL1}, below.  Namely, we write
\begin{align*}
F(I&;M)=L_{K}\big(\ell(I)\big)\cdot S_{K}\big(\ell(I)\big)\cdot D_{K}\big(\rdist(I,\mathbb B)\big) 
\\
&+F_{W}\Big(\frac{\ell(I)}{2^M}\Big) \cdot S_{W}\Big(2^{M}\ell(I)\Big) \cdot D_{W}\Big(\frac{\rdist(I,\mathbb B_{2^{M}})}{M}\Big),
\end{align*}
where $L_{K}$, $S_{K}$, $D_{K}$ are the functions appearing in the definition of a compact 
Calder\'on-Zygmund kernel,
while $L_{W}$, $S_{W}$, $D_{W}$ and the constant $M$ are as in the definition of the weak compactness condition.
We  also set 
\begin{align*}
F(I_{1},\ldots, &I_{n};M) =  \sum_{i=1}^{n}L_{K}\big(\ell(I_{i})\big) \cdot \sum_{i=1}^{n}S_{K}\big(\ell(I_{i})\big) \cdot
 \sum_{i=1}^{n}D_{K}\big(\rdist(I_{i},\mathbb B)\big)\\
&+ \sum_{i=1}^{n}L_{W}\Big(\frac{\ell(I_{i})}{2^M}\Big) \cdot \sum_{i=1}^{n}S_{W}\Big(2^{M}\ell(I_{i})\Big)
\cdot \sum_{i=1}^{n}D_{W}\Big(\frac{\rdist(I_{i},\mathbb B)}{M}\Big).
\end{align*}

The following lemma is proven at the beginning of the proof of Theorem \ref{Mainresult}, below, as it is given in \cite{V1}.
\begin{lemma} \label{lesizzy}
Given 
 $\epsilon >0$, then there exists  exists $M_0$
so that for all  $M>M_{0}$ 
we have $F(I_{1},\ldots ,I_{6};M_{T,\epsilon })\lesssim \epsilon $ whenever all $I_{i}\in {\cal D}_{M}^{c}$.
\end{lemma}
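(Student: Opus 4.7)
Plan: The proof exploits the trichotomy of Remark~\ref{remark in section 2}: each $I \in \mathcal{D}_M^c$ satisfies one of (A) $\ell(I) > 2^M$, (B) $\ell(I) < 2^{-M}$, or (C) $2^{-M} \leq \ell(I) \leq 2^M$ together with $|c(I)|_\infty > (M-1)2^M$. These three regimes drive the arguments of $L$, $S$, and $D$, respectively, to the endpoints at which the admissibility limits~\eqref{limits} force the admissible functions toward zero.

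First, for the prescribed $\epsilon > 0$, I would fix an auxiliary $\eta = \eta(\epsilon) > 0$ to be specified below and use Definition~\ref{admissible} applied to the six admissible functions $L_K, S_K, D_K, L_W, S_W, D_W$ (noting that $M_{T,\epsilon}$ is fixed throughout) to find $M_0 \geq M_{T,\epsilon}$ with the following property: for every $M > M_0$ and every $I \in \mathcal{D}_M^c$, both $L_K(\ell(I)) < \eta$ and $L_W(\ell(I)/2^{M_{T,\epsilon}}) < \eta$ when $I$ is in regime~(A); both $S_K(\ell(I)) < \eta$ and $S_W(2^{M_{T,\epsilon}}\ell(I)) < \eta$ when in regime~(B); and both $D_K(\rdist(I,\mathbb B)) < \eta$ and $D_W(\rdist(I,\mathbb B)/M_{T,\epsilon}) < \eta$ when in regime~(C). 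For regime~(C), the inequality $\rdist(I,\mathbb B) \geq \tfrac{1}{2}(1 + |c(I)|_\infty/\max(\ell(I),1))$ displayed in the notation subsection, combined with $|c(I)|_\infty > (M-1)2^M$ and $\ell(I) \leq 2^M$, gives $\rdist(I,\mathbb B) \gtrsim M$, which makes both $D_K$ and $D_W$ arbitrarily small once $M_0$ is large enough.

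Second, these per-cube smallness estimates are substituted into the two triple products of sums defining $F(I_1,\ldots,I_6; M_{T,\epsilon})$. For each of the six cubes at least one factor is bounded by $\eta$, while the remaining factors are bounded by the $L^\infty$-norms of $L_K, S_K, D_K, L_W, S_W, D_W$; expanding the triple products and collecting terms produces a bound of the form $F(I_1,\ldots,I_6; M_{T,\epsilon}) \lesssim C\eta$, where $C$ depends only on these $L^\infty$-norms and on the combinatorial factor $6$. Choosing $\eta = \epsilon/C$ completes the argument.

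The main technical hurdle is the bookkeeping in this last step: because different cubes $I_i$ may fall into different regimes of the trichotomy, no single sum $\sum_i L_K(\ell(I_i))$, $\sum_i S_K(\ell(I_i))$, or $\sum_i D_K(\rdist(I_i,\mathbb B))$ (and similarly for the $W$-variants) need be small on its own, and a careful case split over how the six cubes are distributed among the three regimes is needed to guarantee that, in every cross-term of the expansion, at least one factor retains the $\eta$ bound.
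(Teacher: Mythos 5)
Your Step 1 is sound: admissibility together with the trichotomy of Remark~\ref{remark in section 2} does give, for $M$ large, that each individual $I_i\in\mathcal{D}_M^c$ has at least one factor from $\{L_K(\ell(I_i)),\,S_K(\ell(I_i)),\,D_K(\rdist(I_i,\mathbb B))\}$ (and likewise for the $W$-triple) below $\eta$. The gap is in Step 2. The quantity $F(I_1,\ldots,I_6;M_{T,\epsilon})$ is a \emph{product of sums}, and expanding
$\bigl(\sum_i L_K(\ell(I_i))\bigr)\bigl(\sum_j S_K(\ell(I_j))\bigr)\bigl(\sum_k D_K(\rdist(I_k,\mathbb B))\bigr)$
into $\sum_{i,j,k} L_K(\ell(I_i))\,S_K(\ell(I_j))\,D_K(\rdist(I_k,\mathbb B))$, the per-cube smallness gives no control over a cross-term with $i,j,k$ distinct: the small factor attached to $I_i$ might be $S_K(\ell(I_i))$, which simply does not appear in that term. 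So ``expanding and collecting'' does not yield $C\eta$. This is not merely the ``bookkeeping hurdle'' you flag at the end: a case split over the three regimes alone cannot rescue it. For instance, with $d=1$, $I_1=[0,2^{-M-1})$ and $I_2=\cdots=I_6=[0,2^{M+1})$, all six cubes lie in $\mathcal{D}_M^c$, yet $L_K(\ell(I_1))$, $S_K(\ell(I_2))$ and $D_K(\rdist(I_2,\mathbb B))$ (the big cube straddles the origin, so $\rdist\approx 1$) are each bounded below independently of $M$, hence so is the first summand of $F$. Thus the claimed bound fails for this configuration when $\epsilon$ is small.

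What your argument is missing is that the lemma is not, and cannot be, a statement about six arbitrary cubes in $\mathcal{D}_M^c$. It is used exclusively for the six specific cubes of Proposition~\ref{symmetricspecialcancellation}: $I_1=I$, $I_2=J$, $I_3=\langle I,J\rangle$, and the concentric dilates $I_4=\lambda_1\tilde{K}_{\max}$, $I_5=\lambda_2\tilde{K}_{\max}$, $I_6=\lambda_2 K_{\min}$, whose side lengths interpolate between $\ell(K_{\min})$ and $\diam(I\cup J)$ and which share a common center. In the configuration above these geometric constraints kick in: the interpolating cube $\lambda_2 K_{\min}$ is forced to have moderate size while sitting near the origin, and therefore it lands inside $\mathcal{D}_M$ --- exactly the situation the hypothesis of Lemma~\ref{lesizzy} excludes. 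A correct proof must exploit this coupling among the $I_i$ (as the proof in \cite{V1} presumably does), rather than treat them as independent members of $\mathcal{D}_M^c$.
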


We end this subsection with two results that we will use to prove the reverse implication in our main result. Their proofs can be found in \cite[Theorem 10.1]{NTV} and \cite[Theorem 3.1]{CobPer}, respectively.

\begin{theorem}\label{fromweak2strong}
Let $T$ be an operator with a standard Calder\'on-Zygmund kernel and bounded from $L^{1}(\mathbb R^{d})$ into $L^{1,\infty }(\mathbb R^{d})$. Then, $T$ is bounded on $L^{p}(\mathbb R^{d})$ for any $1<p<\infty$ with
$
\| T\|_{L^{p}(\mathbb R^{d})\rightarrow L^{p}(\mathbb R^{d})}\lesssim \| T\|_{L^{1}(\mathbb R^{d})\rightarrow L^{1,\infty }(\mathbb R^{d})}
$
and the implicit constant only depends on $p$ and the dimension $d$.
\end{theorem}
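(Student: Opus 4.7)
The plan is to deduce the strong $L^p$-bound from the weak-type $(1,1)$ hypothesis by combining Kolmogorov's inequality with the Fefferman--Stein sharp maximal function inequality. The key insight is that although the weak-type bound by itself is not strong enough to control $L^1$-means of $Tf$, it does control $L^q$-means for any $q<1$; the sharp-function machinery then promotes this to an $L^p$-bound for every $p>1$.

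The main technical step is to establish, for a fixed $q \in (0,1)$, the pointwise sharp-function bound $M^{\#}_q(Tf)(x) \lesssim \|T\|_{L^1\to L^{1,\infty}} M(f)(x)$, where $M^{\#}_q g(x) = \sup_{Q \ni x}\inf_{c}\big(|Q|^{-1}\int_Q |g-c|^q\big)^{1/q}$ is the $L^q$ sharp maximal function and $M$ is the Hardy--Littlewood maximal operator. To prove it, fix a cube $Q$ containing $x$, split $f = f_1 + f_2$ with $f_1 = f\chi_{2Q}$, and take the constant $c := Tf_2(c(Q))$, which is well defined since $f_2$ vanishes near $Q$. The oscillation $\sup_{y \in Q}|Tf_2(y) - c|$ is controlled by $M(f)(x)$ via the usual dyadic-annulus decomposition exploiting the H\"ormander regularity of the standard Calder\'on--Zygmund kernel. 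For the local part, Kolmogorov's inequality applied to the weak $(1,1)$ operator $T$ yields
$$
\bigg(\frac{1}{|Q|}\int_Q |Tf_1|^q\bigg)^{1/q} \lesssim \|T\|_{L^1\to L^{1,\infty}}\,\frac{1}{|Q|}\int_{2Q}|f| \lesssim \|T\|_{L^1\to L^{1,\infty}}\,M(f)(x),
$$
which is precisely why one can afford to drop down to a power $q<1$. Taking the supremum over cubes $Q \ni x$ closes this step.

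With the sharp-function estimate in hand, the $L^p$-bound follows by chaining two classical ingredients: the $L^q$-version of the Fefferman--Stein inequality $\|g\|_p \lesssim \|M^{\#}_q g\|_p$ (valid for $0<q<p<\infty$) and the maximal inequality $\|Mf\|_p \lesssim_{p,d} \|f\|_p$, yielding $\|Tf\|_p \lesssim_{p,d} \|T\|_{L^1\to L^{1,\infty}}\|f\|_p$ with constants depending only on $p$ and $d$, as required. The main obstacle I anticipate is justifying the Fefferman--Stein inequality for $Tf$: in its sharpest form it requires some a priori finiteness hypothesis on $g$. I would bypass this by a density/truncation argument, working first with bounded and compactly supported $f$ (for which the weak $(1,1)$ hypothesis immediately places $Tf$ in some $L^{p_0}$), and then extending to general $f \in L^p$ via the uniform bound so obtained.
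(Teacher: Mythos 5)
The paper does not present its own proof of Theorem \ref{fromweak2strong}; it simply cites \cite[Theorem 10.1]{NTV}, so there is no in-paper argument to compare yours against. Your proposal is correct and is the standard route: Kolmogorov's inequality converts the weak $(1,1)$ bound into control of $L^q$-means with $q<1$ on cubes, the H\"ormander regularity of the kernel controls the oscillation of $Tf_2$ away from the support of $f_1$, and the $L^q$ Fefferman--Stein inequality combined with the Hardy--Littlewood maximal estimate upgrades the pointwise bound $M^{\#}_q(Tf)\lesssim \|T\|_{L^1\to L^{1,\infty}}Mf$ to the desired $L^p$ estimate. The one delicate point is the a priori finiteness needed to invoke Fefferman--Stein, which you correctly flag. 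Your parenthetical justification (``the weak $(1,1)$ hypothesis immediately places $Tf$ in some $L^{p_0}$'') is a bit too quick as stated: a function in $L^{1,\infty}(\mathbb R^d)$ need not lie in any global Lebesgue space. What actually closes the gap for bounded, compactly supported $f$ is the combination of the kernel size bound (giving $|Tf(x)|\lesssim |x|^{-d}$ for large $|x|$, hence decay in every $L^p$, $p>1$, at infinity) with the weak $(1,1)$ estimate near the support, which together guarantee $|\{|Tf|>\lambda\}|<\infty$ for every $\lambda>0$; this is the finiteness hypothesis under which the good-$\lambda$ form of the Fefferman--Stein inequality applies. Once that is spelled out, the argument is complete and the constants depend only on $p$ and $d$, as required.
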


\begin{theorem}\label{compactinterpolation}
Let $A=(A_{0},A_{1})$ and $B=(B_{0},B_{1})$ be quasi-Banach couples and let $T:A\rightarrow B$ such that 
 $T:A_{0}\rightarrow B_{0}$ compactly. Then, for any $0<\theta <1$ and $0<q\leq \infty $, 
  $T:(A_{0},A_{1})_{\theta ,q}\rightarrow (B_{0},B_{1})_{\theta ,q}$ is compact.  
\end{theorem}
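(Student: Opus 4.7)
The plan is to approach this via the $K$-method characterization of real interpolation. Recall that $(A_0,A_1)_{\theta,q}$ is the space of $f \in A_0 + A_1$ such that $t^{-\theta} K(t,f;A_0,A_1) \in L^q(dt/t)$, where $K(t,f) = \inf\{\|a_0\|_{A_0} + t\|a_1\|_{A_1} : f = a_0 + a_1\}$. The goal is to show that any bounded sequence $(f_n) \subset (A_0,A_1)_{\theta,q}$ admits a subsequence whose image under $T$ converges in $(B_0,B_1)_{\theta,q}$.

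The first step is to fix, for every $n \in \N$ and every dyadic level $t_k = 2^k$, $k \in \mathbb Z$, an almost-optimal decomposition $f_n = a_0^{n,k} + a_1^{n,k}$ satisfying $\|a_0^{n,k}\|_{A_0} + t_k \|a_1^{n,k}\|_{A_1} \leq 2 K(t_k,f_n)$. Since $(f_n)$ is bounded in the interpolation space, the sequence $(a_0^{n,k})_n$ is bounded in $A_0$ for each fixed $k$, so the compactness of $T:A_0 \to B_0$ produces, at each level, a subsequence along which $(Ta_0^{n,k})_n$ converges in $B_0$. A Cantor-style diagonal extraction yields a single subsequence $(f_{n_j})$ for which $(Ta_0^{n_j,k})_j$ is Cauchy in $B_0$ for every $k \in \mathbb Z$.

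The main task is then to upgrade this levelwise Cauchyness to convergence in the interpolation norm. For each $t > 0$, choose $k(t)$ so that $t_{k(t)} \approx t$ and use sub-additivity of the $K$-functional to write
\begin{align*}
K(t,Tf_{n_j} - Tf_{n_i};B_0,B_1) &\leq \|Ta_0^{n_j,k(t)} - Ta_0^{n_i,k(t)}\|_{B_0} \\
&\quad + t\|T\|_{A_1 \to B_1}\big(\|a_1^{n_j,k(t)}\|_{A_1} + \|a_1^{n_i,k(t)}\|_{A_1}\big).
\end{align*}
The second term is controlled by a multiple of $K(t,f_{n_j}) + K(t,f_{n_i})$ via the bound $t\|a_1^{n,k}\|_{A_1} \leq 2(t/t_k) K(t_k,f_n)$. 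Integrating this pointwise estimate against $t^{-\theta q}\,dt/t$ and splitting the integral into a bulk of finitely many dyadic levels (where the diagonal Cauchy property makes the $B_0$-term small) and tails (where the boundedness $T:A_i \to B_i$ alone gives uniform smallness because $(f_n)$ is bounded in the interpolation space), one concludes that $(Tf_{n_j})$ is Cauchy in $(B_0,B_1)_{\theta,q}$.

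The main obstacle is precisely this balancing of regimes: the near-optimal decompositions depend on $t$, so one must decouple the use of compactness (available only at finitely many levels at a time) from the use of boundedness (which handles the tails uniformly). An equivalent and perhaps cleaner route proceeds via the discrete $J$-method: writing $f_n = \sum_k u_k^n$ with $u_k^n \in A_0 \cap A_1$ and a uniform $\ell^q$-bound on $2^{-k\theta} J(2^k,u_k^n)$, one extracts convergent subsequences of $(Tu_k^n)_n$ in $B_0$ level by level and reassembles, relying on the $A_1$-boundedness to absorb the $\ell^q$-tail contributions. A further subtlety is that the underlying spaces are only quasi-Banach, so triangle inequalities are available only up to a uniform constant, which one must track but which does not affect the overall structure of the argument.
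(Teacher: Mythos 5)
The paper does not prove this result; it is cited from Cobos--Persson \cite{CobPer}, the quasi-Banach version of Cwikel's one-sided compactness theorem. Your setup---dyadic discretization of the $K$-functional, near-optimal decompositions $f_n=a_0^{n,k}+a_1^{n,k}$ at each level, levelwise compactness plus a Cantor diagonal---is the natural first attack, and the bulk estimate over finitely many $k$ is fine. The genuine gap is exactly at the step you yourself flag as ``the main obstacle'': the tails. After the quasi-triangle inequality you only have
\begin{equation*}
K\big(t,\,Tf_{n_j}-Tf_{n_i};\,B_0,B_1\big)\ \lesssim\ K(t,f_{n_j};A_0,A_1)+K(t,f_{n_i};A_0,A_1),
\end{equation*}
so uniform smallness of the tail integrals requires the family $\{\,t\mapsto t^{-\theta}K(t,f_n)\,\}_n$ to have equi-small tails in $L^q(dt/t)$, and boundedness of $(f_n)$ in $(A_0,A_1)_{\theta,q}$ simply does not imply this. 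Concretely, take $A_0=\ell^1(\mathbb Z)$ and $A_1=\ell^1(\mathbb Z,2^{-k})$, so that $K(t,x)=\sum_k\min(1,t2^{-k})|x_k|$, and set $f_n=2^{n\theta}e_n$. A change of variable $s=t2^{-n}$ shows $\|f_n\|_{\theta,q}$ is a constant $c_{\theta,q}$ independent of $n$, yet $t^{-\theta}K(t,f_n)$ concentrates near $t\approx 2^n$: for every fixed $N$, the tail $\int_{t>2^N}\big(t^{-\theta}K(t,f_n)\big)^q\,dt/t$ remains of order one for all $n>N$ (and likewise for $q=\infty$). So no truncation in $k$ can be made uniformly accurate in $n$ from boundedness of $T$ on the endpoints.

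The $J$-method variant you sketch runs into exactly the same obstruction: the atoms $u_k^n$ can drift to scales $k\to\pm\infty$ as $n\to\infty$, so a finite block in $k$ captures an amount of mass that is not uniform in $n$. This failure is precisely why the one-sided version of the theorem was open for so long; the older results (Lions--Peetre, Persson, Hayakawa) required extra hypotheses such as $A_1\hookrightarrow A_0$ or two-sided compactness, under which the tail control you want does become available. Cwikel's 1992 argument, and the quasi-Banach extension in \cite{CobPer}, do not proceed scale-by-scale in the way you propose: they exploit compactness of $T:A_0\to B_0$ globally across scales (via a retract of the interpolation space onto a vector-valued $\ell^q$ space), and the proof cannot be reduced to the bulk/tail splitting in your outline.
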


\subsection{Characterization of compactness and the lagom projection operator }

The following 
characterization of compact operators in a Banach space with a Schauder basis (see for example \cite{Fab}) was used in 
\cite{V1} to study compact Calderon-Zygmund operators. 

\begin{theorem}\label{charofcompact}
Suppose that $\{e_{n}\}_{n\in \mathbb N}$ is a Schauder basis of a Banach space $E$. For each positive integer $k$, let $P_{k}$ be the canonical projection
$$
P_{k}\Big(\sum_{n\in \mathbb N}\alpha_{n}e_{n}\Big)=\sum_{n\leq k}\alpha_{n}e_{n}.
$$
Then, a bounded linear operator $T:E \to E$ is compact if and only if $P_{k}\circ T$ converges to $T$ in operator norm.
\end{theorem}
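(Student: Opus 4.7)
The plan is to prove the two implications separately, using classical Banach space arguments. The key preliminary observation, which is a standard consequence of the Banach–Steinhaus theorem applied to the pointwise convergence $P_{k}x\to x$ guaranteed by the Schauder basis property, is that the family $\{P_{k}\}_{k\in\mathbb{N}}$ is uniformly bounded in operator norm. I will invoke this and set $C=\sup_{k}\|P_{k}\|<\infty$.

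For the reverse direction ($\Leftarrow$), the argument is essentially one line: each $P_{k}\circ T$ has its range contained in the finite-dimensional subspace $\mathrm{span}\{e_{1},\dots,e_{k}\}$, hence is a bounded finite-rank operator and therefore compact. Since the space of compact operators $\mathcal{K}(E)$ is closed in the operator norm topology on $\mathcal{B}(E)$, the operator-norm limit $T$ of the sequence $\{P_{k}\circ T\}$ is itself compact.

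For the forward direction ($\Rightarrow$), I would argue by a finite $\varepsilon$-net combined with the uniform boundedness of the projections. Fix $\varepsilon>0$. Since $T$ is compact, the image $T(B_{E})$ of the closed unit ball is relatively compact, so there exist finitely many points $y_{1},\dots,y_{n}\in T(B_{E})$ such that for every $x\in B_{E}$ one can find $i=i(x)$ with
\begin{equation*}
\|Tx-y_{i}\|<\frac{\varepsilon}{2(1+C)}.
\end{equation*}
For each individual $y_{i}$ we have $P_{k}y_{i}\to y_{i}$ by the basis property, so there is an integer $k_{0}=k_{0}(\varepsilon)$ such that $\|(I-P_{k})y_{i}\|<\varepsilon/2$ for every $i\in\{1,\dots,n\}$ and every $k\ge k_{0}$. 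Then for any $x\in B_{E}$ and any such $k$,
\begin{equation*}
\|(I-P_{k})Tx\|\le \|I-P_{k}\|\cdot\|Tx-y_{i}\|+\|(I-P_{k})y_{i}\|\le (1+C)\cdot\frac{\varepsilon}{2(1+C)}+\frac{\varepsilon}{2}=\varepsilon.
\end{equation*}
Taking the supremum over $x\in B_{E}$ gives $\|T-P_{k}\circ T\|\le\varepsilon$ for all $k\ge k_{0}$, which is the desired operator-norm convergence.

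The only place that requires genuine care is the forward direction, and the main technical point is really the uniform boundedness of the canonical projections $P_{k}$; once that is in hand, the rest is a routine net/approximation argument. I do not anticipate any serious obstacle, since this is a textbook characterization (cf.\ the reference to \cite{Fab} already cited in the excerpt) and all the ingredients — Banach–Steinhaus, compactness of $T(B_{E})$, and the closedness of $\mathcal{K}(E)$ in the operator norm — are standard.
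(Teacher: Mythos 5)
Your proof is correct and is the standard textbook argument; the paper itself does not prove this statement but merely cites \cite{Fab} for it, and your argument is precisely the one found in such references. The two ingredients you isolate — uniform boundedness of the canonical projections $P_k$ via Banach–Steinhaus, and a finite $\varepsilon$-net in $T(B_E)$ combined with closedness of the compact operators in operator norm for the converse — are exactly the right ones, and the estimate $\|(I-P_k)Tx\|\le(1+C)\|Tx-y_i\|+\|(I-P_k)y_i\|$ is carried out correctly.
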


Let $E$ be one of the following Banach spaces: the Lebesgue space $L^{p}(\mathbb R^{d})$, $1<p<\infty $, the Hardy space $H^{1}(\mathbb R^{d})$, or the space $\CMO(\R^{d})$, defined in Subsection \ref{defCMO} below. 
In each of these cases, $E$ is equipped with a wavelet basis which is also a Schauder basis (see \cite{HerWeiss} and Lemma \ref{lemacmochar}). Moreover, in these cases, we can assume 
that the wavelets belong to $\S_{N}(\R^{d})$ and, if needed, that they are compactly supported. 
However, we intentionally decide to use more general wavelets to explicitly show that our results hold in settings where, for example, compactly supported wavelets are not available.

\begin{definition}\label{lagom}
Let $E$ be one of the previously mentioned Banach spaces.  
Let $(\psi_{I}^i)_{I\in {\mathcal D}, i=1,\ldots ,2^{d}-1}$ be a normalized wavelet basis of $E$ and $(\tilde{\psi}^{i}_I)_{\in \mathcal{D}, i=1,\ldots ,2^d-1}$ its dual wavelet basis. 
Then, for every $M\in \mathbb N$,
we define the lagom projection operator 
$$
P_{M}f=\sum_{I\in {\cal D}_{M}}\sum_{i=1}^{2^{d}-1}\langle f,\tilde{\psi}_{I}^{i}\rangle \psi_{I}^{i},
$$
where $\langle f,\tilde\psi_{I}^{i}\rangle =\int_{\mathbb R^{d}}f(x)\overline{\tilde\psi_{I}^{i}(x)}dx$.
\end{definition}

We also define $P_{M}^{\perp }f=f-P_{M}f$, and we  remark that the equality
\begin{equation}\label{ortho}
P_{M}^{\perp}f=\sum_{I\in {\cal D}_{M}^{c}}\sum_{i=1}^{2^{d}-1}\langle f, \tilde\psi_{I}^{i}\rangle \psi_{I}^{i}
\end{equation}
is to be interpreted in the sense of  Schauder bases, i.e.,
$$
\lim_{M'\rightarrow \infty }\Big\| P_{M}^{\perp}f
-\sum_{I\in {\cal D}_{M'}\backslash {\cal D}_{M}}\sum_{i=1}^{2^{d}-1}\langle f,\tilde\psi_{I}^{i}\rangle \psi_{I}^{i}  \Big\|_{E}=0.
$$

In the language of the lagom projection, we can give yet another alternative formulation of weak compactness  (Definition \ref{WB}). Namely, an operator $T$ is weakly compact if and only if, for every pair $\phi_I, \varphi_I$ of
$L^2$-normalized bump functions adapted to $I$, we have
\begin{align*}
\lim_{M\rightarrow \infty }|\langle (P_{M}^{\perp}\circ T)(\phi_I),\varphi_I)\rangle |=0, \quad \forall I \in 
\mathcal{D}.
\end{align*}

Strictly speaking, the characterization given in Theorem \ref{charofcompact}   is not sufficient for our purposes since, in Section \ref{L1weak}, we also consider compact operators into the space $L^{1,\infty}(\R^d)$, which is a quasi-Banach space.  This is addressed in Definition \ref{compactnessonweakL1}, 
where we define compact operators from $L^1(\R^d)$ into $L^{1,\infty}(\R^d)$ in the topological sense.
\begin{definition}\label{compactnessonweakL1}
An operator $T: L^{1}(\mathbb R^{d})\rightarrow L^{1,\infty }(\mathbb R^{d})$ is compact if, for every bounded set 
$A\subset  L^{1}(\mathbb R^{d})$, the set $T(A)$ is relatively compact in $L^{1,\infty }(\mathbb R^{d})$. Equivalently, $T$ is compact if, 
for every sequence $(f_{n})_{n\in \mathbb N}\subset  L^{1}(\mathbb R^{d})$ with $\| f_{n}\|_{L^{1}(\mathbb R^{d})}\lesssim 1$, 
there exist 
a subsequence $(f_{n_{k}})_{k\in \mathbb N}$ and 
$g\in L^{1,\infty }(\mathbb R^{d})$ such that 
$
\lambda m(\{ x \in \mathbb R^{d} : |Tf_{n_{k}}(x)-g(x)|>\lambda \} )
$
tends to zero when $k$ tends to infinity uniformly for all $\lambda >0$.
\end{definition}

\begin{remark}\label{resizzy}
Observe  that finite rank operators are compact in this sense, and that the limit of finite rank operators is a compact operator. 
\end{remark}
We also note that, in light of Theorem \ref{charofcompact}, it would be natural to assume that the above definition is equivalent to asking that $P_M^\perp T$ converges to zero in the operator norm $\| \cdot \|_{L^{1}(\mathbb R^{d})\rightarrow L^{1,\infty }(\mathbb R^{d})}$. However, this is not the case as we see from the following example.
\begin{example}\label{counterexample}
        Let $(\psi_{I})_{I\in \mathcal D}$ be the Haar wavelet of $L^{2}(\R)$ and $P_{M}$ the associated lagom projection operator. 
	Then, the operator defined by $$Tf  = \langle f,\psi_{[0,1]}\rangle \chi_{[0,1]}$$ is compact from $L^1(\R)$ to $L^{1,\infty}(\R)$ (since it is bounded and of finite rank), but $P_M^\perp T$ does not converge to zero in $L^{1,\infty}(\R)$. Indeed, it follows from the computation below   that $P_M^\perp T \psi_{[0,1]}= 2^{-M} \chi_{[0,2^M]}$, whence  $\norm{P_M^\perp T}_{L^1(\R) \rightarrow L^{1,\infty}(\R)}  \geq 1$ for all $M\in \N$.

First, we observe that
$$
P_{M}^{\perp }T\psi_{[0,1]}=P_{M}^{\perp}\chi_{[0,1]}
=\chi_{[0,1]}-\sum_{I\in \mathcal D_{M}}\langle \chi_{[0,1]},\psi_{I}\rangle \psi_{I}.
$$
Now, $\langle \chi_{[0,1]},\psi_{I}\rangle\neq 0$ if and only if $I=(0,2^{k})$ with $1\leq k\leq M$ and, in that case, 
$\langle \chi_{[0,1]},\psi_{I}\rangle=|I|^{-1/2}$.
With this, we obtain 
\begin{align*}
P_{M}^{\perp }T\psi_{[0,1]}
&=\chi_{(0,1)}-\sum_{1\leq k\leq M}2^{-\frac{k}{2}}2^{-\frac{k}{2}}(\chi_{(0,2^{k-1})}-\chi_{(2^{k-1},2^{k})})
\\
&=2^{-M}\chi_{(0,1)}+
\sum_{1\leq j\leq M}2^{-M}\chi_{(2^{j-1},2^{j})}
= 2^{-M}\chi_{(0,2^{M})}
\end{align*}
as claimed.
\end{example}

\subsection{The space $CMO(\mathbb R^{d})$ and the construction of $T(1)$}\label{defCMO}
\begin{definition}
We define $\CMO(\mathbb R^{d})$ as the closure in $\BMO(\mathbb R^{d})$ of the space of continuous functions vanishing at infinity. 
\end{definition}

The next lemma gives two characterizations of $\CMO(\mathbb R^{d})$: the first one in terms of the average deviation from the mean, and the second one in terms of a  
wavelet decomposition. 
See \cite{Roch} and \cite{LTW} for  the proofs.
We will only use the latter formulation.

\begin{lemma} \label{lemacmochar}The following statements are equivalent:
\begin{enumerate}
\item[(i)] $f\in \CMO(\mathbb R^{d}),$

\item[(ii)] $f\in \BMO(\mathbb R^{d})$ and
\begin{equation*}\label{CMO}
\lim_{M\rightarrow \infty } \sup_{I\notin {\mathcal I}_{M}} 
\frac{1}{|I|}\int_{I}\Big|f(x)-\frac{1}{|I|}\int_{I}f(y)dy\Big|dx =0,\\
\end{equation*}

\item[(iii)] $f\in \BMO(\mathbb R^{d})$ and
\begin{equation*}\label{CMO2}
\lim_{M\rightarrow \infty} 
\sup_{\Omega \subset \mathbb R^{d}} \Big(\frac{1}{|\Omega |}
\sum_{\tiny \begin{array}{c}I\notin {\cal D}_{M}\\ I\subset \Omega \end{array}}
\sum_{i=1}^{2^{d}-1}|\langle f,\psi_{I}^{i}\rangle |^{2}\Big)^{1/2}
=0,
\end{equation*}
where the supremum is taken over all measurable sets $\Omega \subset \mathbb R^{d}$.
\end{enumerate}
\end{lemma}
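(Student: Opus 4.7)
The plan is to prove the three equivalences in a cycle: (i) $\Rightarrow$ (ii) $\Rightarrow$ (iii) $\Rightarrow$ (i), relying throughout on the standard wavelet characterization of $\BMO$ and on mollification arguments. Throughout, I will implicitly fix a sufficiently regular wavelet basis (e.g.\ with bump wavelets in $\S_N(\R^d)$) and write $f_I = |I|^{-1}\int_I f$ for the average.

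For (i) $\Rightarrow$ (ii), fix $\epsilon > 0$ and choose a continuous function $g$ vanishing at infinity with $\|f-g\|_{\BMO} < \epsilon$, which is possible by definition of $\CMO$. For $g$, elementary analysis gives that the oscillation $|I|^{-1}\int_I |g - g_I|\, dx$ is small whenever $I$ is non-lagom: if $\ell(I)$ is very small, uniform continuity of $g$ (on a compact exhaustion adapted to $g$ vanishing at infinity) handles it; if $\ell(I)$ is very large or $|c(I)|_\infty$ is very large, most of $I$ sees values of $g$ close to $0$, so both $g_I$ and the pointwise values of $g$ on $I$ are small in $L^1$ average. For the error $f - g$, just bound each oscillation by $\|f - g\|_{\BMO} < \epsilon$. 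Letting $M \to \infty$ and then $\epsilon \to 0$ yields (ii).

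For (ii) $\Rightarrow$ (iii), the plan is to invoke the well-known equivalence (for sufficiently regular wavelets)
\begin{equation*}
\frac{1}{|I|}\int_I |f - f_I|\, dx \; \approx \; \Big( \frac{1}{|I|} \sum_{J \subset 3I,\, \ell(J)\leq \ell(I)} \sum_{i} |\langle f,\psi_J^i\rangle|^2 \Big)^{1/2} + \text{(tail term)},
\end{equation*}
where the tail comes from wavelets of comparable or larger scale but nearby location, and is dominated by a decaying average of neighbouring oscillations. For any measurable $\Omega$, decompose it into a Whitney covering by dyadic cubes $\{J_k\}$ and estimate the $\Omega$-local wavelet sum by the oscillations on the $J_k$'s; provided $M$ is large enough that only non-lagom $J_k$'s appear (controlled by discarding the lagom ones, whose total contribution is an absolute constant that can be absorbed when $\Omega$ is large), (ii) forces the wavelet sum to be uniformly small, giving (iii).

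For (iii) $\Rightarrow$ (i), build an explicit approximation: define
\begin{equation*}
f_M = \sum_{I \in \mathcal{D}_M} \sum_{i=1}^{2^d-1} \langle f, \psi_I^i\rangle \, \psi_I^i.
\end{equation*}
This is a finite linear combination of smooth, rapidly decaying wavelets, hence continuous and vanishing at infinity. Using the wavelet BMO characterization applied to $f - f_M = \sum_{I \notin \mathcal{D}_M} \sum_i \langle f, \psi_I^i\rangle \psi_I^i$, one gets $\|f - f_M\|_{\BMO} \lesssim \sup_\Omega (|\Omega|^{-1} \sum_{I \subset \Omega,\, I \notin \mathcal{D}_M} \sum_i |\langle f,\psi_I^i\rangle|^2)^{1/2}$, which tends to $0$ by (iii). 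Thus $f \in \CMO$.

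The main obstacle, in my view, is the implication (ii) $\Rightarrow$ (iii): one must transfer an oscillation condition phrased over arbitrary cubes into a square-function bound over dyadic ones while keeping precise track of which cubes are "lagom" in each formulation. The usual Whitney/averaging tricks that prove the BMO equivalence produce, for a given dyadic $I$, contributions from translates and neighbours, and one has to verify that the non-lagom status propagates through these neighbour relations (up to adjusting $M$) so that only the hypothesized smallness in (ii) is used. The implications (i) $\Rightarrow$ (ii) and (iii) $\Rightarrow$ (i) are essentially soft, and can be borrowed directly from the cited references \cite{Roch, LTW}.
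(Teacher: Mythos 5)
The paper does not prove this lemma; it simply defers to \cite{Roch} and \cite{LTW} and notes that only formulation (iii) will be used. So there is no in-paper argument to compare against, and the question is whether your sketch stands on its own.

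Your implications (i) $\Rightarrow$ (ii) and (iii) $\Rightarrow$ (i) are sound in outline. The first is the usual approximation argument: take $g$ continuous vanishing at infinity with $\|f-g\|_{\BMO}<\epsilon$, control the oscillation of $g$ over non-lagom cubes by uniform continuity (small scale) and decay (large scale or far location), and absorb $f-g$ by its $\BMO$ norm. The second is clean because the wavelets are orthonormal, so $\langle f-P_Mf,\psi_I^i\rangle=\langle f,\psi_I^i\rangle$ for $I\notin\mathcal{D}_M$ and $0$ otherwise; the square-function $\BMO$ norm of the tail $f-P_Mf$ is then exactly the quantity that (iii) sends to zero, and $P_Mf$ is a finite sum of smooth decaying wavelets, hence continuous and vanishing at infinity.

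The genuine gap is where you flag it, in (ii) $\Rightarrow$ (iii), and it is not merely a matter of detail. Two concrete problems with the sketch. First, after replacing a measurable $\Omega$ by the open union of the dyadic cubes it contains, the Whitney decomposition of this open set does not put every dyadic $I\subset\Omega$ inside a single Whitney piece: a large dyadic $I$ close to $\partial\Omega$ overlaps many Whitney cubes, so the naive ``local square function controlled by local oscillation, then sum over Whitney pieces'' does not account for all the terms in the sum defining (iii). The standard wavelet-$\BMO$ proof uses a decomposition of $f$ relative to a fixed cube together with $L^2$ bounds and geometric tails, not a cube-by-cube two-sided pointwise estimate of the kind your display suggests, and that structure has to be preserved. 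Second, and more to the point of this specific lemma, you must show that if $I\notin\mathcal{D}_M$ then the cubes whose oscillations dominate the estimate for $|\langle f,\psi_I^i\rangle|$ lie outside $\mathcal{D}_{M'}$ for some $M'=M'(M)\to\infty$, so that only the hypothesis in (ii) is invoked rather than the plain $\BMO$ bound; without this quantitative propagation of the non-lagom condition the argument reproduces the classical $\BMO$ characterization but does not yield the vanishing limit. You identify this obstacle but do not resolve it, and it is precisely the content of the lemma.
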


Next, we state a
technical lemma needed  to give meaning to $T(1)$ and $T^{*}(1)$.
To this end, we introduce some notation.
For $a \in \mathbb{R}$ and $\lambda>0,$ we define the translation operator
as $T_af(x) = f(x-a)$ and the dilation operator as $D_\lambda f(x) = f(x/\lambda)$. 
Let $\Phi \in \mathcal{S}(\mathbb R^{d})$ be such that $\Phi(x) = 1$ for $|x|_{\infty } \leq 1$, $0 < \Phi(x)<1$ for
$1 < |x|_{\infty } < 2$ and $\Phi(x) = 0$ for $|x|_{\infty }>2$.
\begin{lemma}\label{definecmo}
Let $T$ be a linear operator associated with a compact Calder\'on-Zygmund kernel $K$ with parameter 
$0<\delta <1$.

Let $I\subset \mathbb R^{d}$ be a cube and let $f\in \S_{N}(\R^{d})$ have compact support in $I$ and mean zero.
Then, the limit
$${\mathcal L}(f)=\lim_{k\to \infty} \langle T(
{\cal T}_{a} 
{\mathcal D}_{2^{k}\ell(I)}\Phi ),f\rangle $$
exists 
and is independent of 
the translation parameter $a\in \mathbb R$ and 
the cut-off function $\Phi$.

\end{lemma}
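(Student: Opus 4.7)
The approach is to introduce an auxiliary bump $\Phi_{0}\in \S_{N}(\R^{d})$, identically equal to $1$ on $4I$ and compactly supported, and to split
\[
\langle T\phi_{k},f\rangle = \langle T\Phi_{0},f\rangle + \langle T(\phi_{k}-\Phi_{0}),f\rangle,
\]
where $\phi_{k} := {\cal T}_{a}{\cal D}_{2^{k}\ell(I)}\Phi$. The first term on the right-hand side is a fixed finite number, furnished by the continuity of $T$ on $\S_{N}(\R^{d})$, and is manifestly independent of $k$, $a$, and $\Phi$. The whole analysis therefore reduces to the second term.

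For $k$ large enough that $\phi_{k}\equiv 1$ on $\supp \Phi_{0}$, the difference $\phi_{k}-\Phi_{0}$ vanishes on $4I\supset \supp f$, so Definition~\ref{intrep} applies and yields
\[
\langle T(\phi_{k}-\Phi_{0}),f\rangle = \int (\phi_{k}(t)-\Phi_{0}(t))\int f(x)K(t,x)\,dx\,dt.
\]
Using the mean zero condition $\int f=0$, I replace $K(t,x)$ by $K(t,x)-K(t,c(I))$ in the inner integral. For $t\in (4I)^{c}$ and $x\in I$ one verifies $2|x-c(I)|_{\infty}\leq \ell(I) < |t-x|_{\infty}$, so the compact Calder\'on-Zygmund estimate \eqref{CCZ} gives
\[
|K(t,x)-K(t,c(I))|\lesssim \frac{\ell(I)^{\delta}}{|t-c(I)|_{\infty}^{d+\delta}},
\]
where the boundedness of $F$ has been absorbed into the implicit constant. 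Since $|t-c(I)|_{\infty}^{-d-\delta}$ is integrable on $(4I)^{c}$ and $|\phi_{k}|,|\Phi_{0}|\leq 1$, the integrand is dominated by a $k$-independent integrable function; combined with $\phi_{k}(t)\to 1$ pointwise, the dominated convergence theorem yields
\[
\lim_{k\to \infty}\langle T(\phi_{k}-\Phi_{0}),f\rangle = \int (1-\Phi_{0}(t))\int f(x)\big[K(t,x)-K(t,c(I))\big]\,dx\,dt,
\]
establishing the existence of ${\cal L}(f)$.

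Independence of $a$ and $\Phi$ follows once the closed-form expression for ${\cal L}(f)$ is shown to be independent of the auxiliary cutoff $\Phi_{0}$, which itself depends only on $I$. Given two admissible choices $\Phi_{0},\Phi_{0}'$, the difference $\Phi_{0}-\Phi_{0}'$ vanishes on $\supp f$, so Definition~\ref{intrep} combined with the mean zero property yields
\[
\langle T(\Phi_{0}-\Phi_{0}'),f\rangle = \int (\Phi_{0}-\Phi_{0}')(t)\int f(x)\big[K(t,x)-K(t,c(I))\big]\,dx\,dt,
\]
which is exactly the identity needed to conclude that the two closed forms coincide. Since any choice of $a$ and $\Phi$ produces a sequence $\phi_{k}$ that can be decomposed with the same $\Phi_{0}$ (once $k$ is large enough depending on $a,\Phi$), the corresponding limits must agree.

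The main obstacle is essentially bookkeeping: tracking the scales so that $\phi_{k}\equiv 1$ on $\supp \Phi_{0}$ for large $k$, and verifying the uniform integrable domination that feeds into dominated convergence. The analytical content — mean zero cancellation set against H\"older continuity of the kernel — mirrors the classical construction of $T(1)$ for standard Calder\'on-Zygmund operators; the compactness hypothesis on $K$ plays no real role in this particular lemma and only becomes essential further on, when one studies $T(1)$ as an element of $\CMO(\R^{d})$.
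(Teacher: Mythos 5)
Your proof is correct and follows the standard route for constructing $T(1)$ for Calder\'on--Zygmund operators; the paper itself delegates the proof to \cite{V1}, and the argument there is of the same type: split off a compactly supported reference bump equal to $1$ near $\supp f$, represent the tail via the kernel, exploit the mean-zero of $f$ to subtract $K(t,c(I))$, and apply the H\"older estimate plus dominated convergence. Two small points worth noting. First, in the independence-of-$\Phi_{0}$ step, one should observe that $\supp(\Phi_{0}-\Phi_{0}')$ is contained in $\overline{(4I)^{c}}$ while $\supp f\subset\overline{I}$ lies in the interior of $4I$, so the compact supports are genuinely disjoint and Definition~\ref{intrep} applies; you implicitly use this but it deserves a word. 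Second, your closing remark that the compactness of the kernel plays no role here is accurate — only the uniform boundedness of $F$ enters, so the lemma holds verbatim for standard Calder\'on--Zygmund kernels, which is consistent with the fact that this is the classical construction of $T(1)$.
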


The previous lemma allows one to define $T(1)$ as an element on the dual of the space of functions in 
$\S_{N}(\R^{d})$ with compact support and mean zero. Namely, define $\langle T(1), f\rangle = {\mathcal L}(f)$ for all $f \in \S_{N}(\R^{d})$.

\subsection{Compactness on $L^{p}(\mathbb R^{d})$}
We now state the main result in \cite{PPV}, whose proof, although proven only for the one-dimensional case, also holds in the setting of several variables. 
\begin{theorem}\label{Mainresult}
Let
$T$ be a linear operator associated with a standard Calder\'on-Zygmund kernel. 

Then, $T$ extends to a compact operator on $L^p(\mathbb R^{d})$, for any $1 < p< \infty$, if and only if $T$ 
is associated with a compact Calder\'on-Zygmund kernel and it satisfies both
the weak compactness condition
and the cancellation conditions
$T(1), T^{*}(1) \in \CMO(\mathbb R^{d})$. 

Under the same hypotheses, $T$ is also compact as a map from $L^{\infty}
(\mathbb R^{d})$ into 
$\CMO(\mathbb R^{d})$. Moreover, with the extra assumption $T(1)=T^{*}(1)=0$, $T$ is compact 
from $\BMO(\mathbb R^{d})$ into $\CMO(\mathbb R^{d})$.

\end{theorem}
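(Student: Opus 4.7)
The plan is to adapt the classical $T(1)$-theorem scheme to the compact setting via the lagom projection $P_M$ and the characterization of compactness in Theorem \ref{charofcompact}. For the sufficient direction on $L^p$, I would reduce compactness of $T$ to showing $\|P_M^\perp\circ T\|_{L^p\to L^p}\to 0$ as $M\to\infty$. Expanding in the wavelet basis $(\psi_I^i)$, the matrix coefficients $\langle T\psi_I^i,\psi_J^j\rangle$ are controlled, via kernel smoothness and size, by the quantity $F(\langle I,J\rangle;M)$ times a geometric factor depending on $\rdist(I,J)$ and $\ecc(I,J)$. The classical $T(1)$ decomposition $T=\pi_{T(1)}+\pi^*_{T^*(1)}+T_{\perp}$ splits off the non-cancellation parts as two paraproducts, leaving a ``perfect'' piece $T_{\perp}$ whose wavelet coefficients decay rapidly off the diagonal.

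For $T_\perp$ I would combine the compact CZ decay (away from the diagonal) with the weak compactness condition (for on-diagonal contributions from lagom cubes of comparable size), then apply a Schur test or Cotlar--Stein almost-orthogonality argument in $L^2$, followed by interpolation to $L^p$. This bounds $\|P_M^\perp T_{\perp}\|_{L^p\to L^p}$ by a finite sum of terms of the form $F(I_1,\ldots,I_6;M)$ over non-lagom tuples, which tends to zero by Lemma \ref{lesizzy}. For the paraproducts, the hypothesis $T(1),T^*(1)\in\CMO$ translates, via Lemma \ref{lemacmochar}(iii), to the statement that the sequences $(\langle T(1),\psi_I^i\rangle)$ and $(\langle T^*(1),\psi_I^i\rangle)$ are Carleson sequences whose square-sums restricted to non-lagom cubes vanish uniformly over measurable sets $\Omega$; this upgrades the standard $L^p$-boundedness of paraproducts to $L^p$-compactness through the same lagom-tail control.

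The necessity direction is more routine: one tests $T$ and $T^*$ against $L^2$-normalized bump pairs on non-lagom cubes and lets $M\to\infty$. Pairs supported on cubes that are far apart, of very different scales, or far from the origin force the compact CZ decay estimate; diagonal pairs $\phi_I=\varphi_I$ yield the weak compactness condition; and Lemma \ref{definecmo} combined with the wavelet characterization in Lemma \ref{lemacmochar}(iii) extracts $T(1),T^*(1)\in\CMO$. For the $L^\infty\to\CMO$ statement I would rerun the sufficiency argument, now measuring $P_M^\perp Tf$ in the $\CMO$-wavelet seminorm from Lemma \ref{lemacmochar}(iii); the tail control is structurally identical. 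Under $T(1)=T^*(1)=0$ both paraproducts vanish, so the perfect-part argument alone extends the result from $L^\infty$ to $\BMO\to\CMO$. I expect the main obstacle to be the passage from ``Carleson'' to ``compact-Carleson'' in the paraproduct analysis, namely coupling the lagom-tail vanishing of the wavelet coefficients of $T(1)$ and $T^*(1)$ to those of a generic input function in a form that remains uniformly small in $M$ across all $f$ in the unit ball of $L^p$.
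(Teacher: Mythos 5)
Note first that the paper does not actually prove Theorem~\ref{Mainresult}: it is quoted from \cite{PPV} and \cite{V1}, and only the auxiliary Proposition~\ref{symmetricspecialcancellation} and the paraproduct construction are imported for use in the endpoint argument. Your sketch essentially reconstructs the scheme of those references, and it mirrors what the paper itself does for $L^1\to L^{1,\infty}$ in Propositions~\ref{weakL1} and~\ref{paraproducts1}: split $T$ into a special-cancellation part (handled via the matrix estimate of Proposition~\ref{symmetricspecialcancellation} together with the lagom/non-lagom dichotomy and Lemma~\ref{lesizzy}) and two paraproducts (handled via Lemma~\ref{lemacmochar}(iii), e.g.\ through the identity $P_M^\perp T_b = T_{P_M^\perp b}$). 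So the roadmap is right. Two minor cautions: the control of $\|P_M^\perp T_\perp\|_{L^p\to L^p}$ is not a finite sum of $F$-terms but an infinite sum over wavelet cubes, where the lagom part is annihilated by $P_M^\perp$ and the non-lagom tail is made small by Lemma~\ref{lesizzy} while the geometric factors in $\ecc$ and $\rdist$ keep the sum absolutely convergent; and the passage from $L^2$ to $L^p$ is cleanest as a Riesz--Thorin interpolation applied directly to the operators $P_M^\perp T_\perp$ (small $L^2\to L^2$ norm, uniformly bounded $L^q\to L^q$ norm) rather than as ``Cotlar--Stein plus interpolation.''
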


We end this section stating 
the main auxiliary result in the proof of Theorem \ref{Mainresult}, which is also the starting point of the proof of the endpoint result in this paper. To this end, we provide the following definitions:
given two cubes $I$ and $J$, we denote $K_{min}=J$ and $K_{max}=I$ if $\ell(J)\leq \ell(I)$, and $K_{min}=I$ and $K_{max}=J$ otherwise. We denote by $\tilde{K}_{max}$ the translate of $K_{max}$ with the same centre as $K_{min}$.

\begin{proposition}\label{symmetricspecialcancellation}
Let $T$ be a linear operator associated with a compact Calder\'on-Zygmund kernel with parameter $\delta$. 
We assume $T$ satisfies the weak compactness condition
and the special cancellation condition $T(1)=0$ and $T^{*}(1)=0$.

Then, 
for any $\theta\in (0,1)$ small enough, 
there exist $0<\delta'<\delta $, $N\geq 1$ 
and $C_{\delta'}>0$ such that
for every $\epsilon >0$, all cubes $I, J$ 
and 
all mean zero bump functions $\psi_{I}$, $\psi_{J}$, $L^{2}$-adapted to $I$ and $J$ respectively
with constant $C>0$ and order $N$, 
we have
\begin{equation*}\label{twobump2}
|\langle T\psi_{I},\psi_{J}\rangle |\leq  C_{\delta'} C \, \frac{\ec(I,J)^{\frac{1}{2}+\frac{\delta'}{d}}}{ \rdist(I,J)^{d+\delta'} }
\Big(F(I_{1},\ldots ,I_{6};M_{T,\epsilon })+\epsilon \Big),
\end{equation*}
where
$I_{1}=I$, $I_{2}=J$, $I_{3}=\langle I, J\rangle $, $I_{4}=\lambda_{1}\tilde{K}_{max}$, 
$I_{5}=\lambda_{2}\tilde{K}_{max}$ and
$I_{6}=\lambda_{2}K_{min}$
with $\lambda_{1}=\ell(K_{max})^{-1}{\rm diam}(I\cup J)$,  
$\lambda_{2}=\ell(K_{min})^{-\theta}{\rm diam}(I\cup J)^{\theta }$. 
\end{proposition}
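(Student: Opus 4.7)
The plan is to follow the architecture of the analogous one-dimensional estimate from \cite{V1}, adapted to higher dimensions: treat the inequality as a refined almost-orthogonality estimate between two mean-zero bumps, splitting into geometric cases according to whether $\psi_I$ and $\psi_J$ are kernel-separated, of comparable scale, or of disparate scale with overlap. Without loss of generality I would assume $\ell(J)\leq \ell(I)$, so $K_{min}=J$ and $K_{max}=I$; the case $\ell(I)\leq \ell(J)$ follows by duality using $T^{*}(1)=0$ in place of $T(1)=0$.

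First I would dispose of the \emph{separated} regime, where $\rdist(I,J)$ is large enough that $\supp \psi_I$ and the effective support of $\psi_J$ are essentially disjoint. Here Definition \ref{intrep} expresses $\langle T\psi_I,\psi_J\rangle$ as a double integral of $K$. Subtracting $K(t,c(J))$ using the mean zero of $\psi_J$ and invoking the compact Calder\'on-Zygmund estimate \eqref{CCZ} produces, after integration, the factor $\ec(I,J)^{1/2+\delta'/d}\cdot \rdist(I,J)^{-d-\delta'}$ together with $F$-factors whose arguments are comparable to $\ell(\langle I,J\rangle)$ and $\rdist(\langle I,J\rangle,\mathbb B)$. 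These are absorbed into the $L_K, S_K, D_K$ terms of $F(I_1,\dots,I_6;M_{T,\epsilon})$ with $I_3=\langle I,J\rangle$.

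The main work lies in the \emph{nested} regime, where $J$ sits inside a moderate neighbourhood of $I$ with $\ell(J)\ll \ell(I)$. The strategy is a three-region decomposition centred at $c(J)$: a \emph{far} annulus outside $I_4=\lambda_1\tilde K_{max}$, an \emph{intermediate} annulus $I_4\setminus I_5$ with $I_5=\lambda_2\tilde K_{max}$, and a \emph{core} region inside $I_5$ of scale $\ell(I_6)=\ell(\lambda_2 K_{min})$. Splitting $\psi_I$ smoothly into three corresponding pieces and pairing with $\psi_J$, the far contribution is handled by the kernel estimate of the previous paragraph and produces the $I_4$-factor. For the intermediate and core pieces I would use Lemma \ref{definecmo} together with $T(1)=0$ to replace the piece of $\psi_I$ localised near $J$ by a zero-mean bump adapted to an enlargement of $K_{min}$, modulo a correction controlled by the kernel estimate. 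The corrected term is a pair of $L^{2}$-adapted bumps to which the weak compactness condition of Definition \ref{WB} applies, producing the $I_{5}$ and $I_{6}$ contributions to $F(I_1,\dots,I_6;M_{T,\epsilon})$. Finally, the residual small-scale part is absorbed into the $\epsilon$ term using the $\epsilon$-slack built into Definition \ref{WB}. The regime where $\rdist(I,J)\approx 1$ and $\ec(I,J)\approx 1$ is a degenerate sub-case of this analysis and is handled directly by a single application of the weak compactness condition.

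The principal obstacle is the bookkeeping: one has to verify that the geometric loss accumulated in each sub-region combines to give exactly $\ec(I,J)^{1/2+\delta'/d}\,\rdist(I,J)^{-d-\delta'}$, that the parameter $\theta$ in the definition of $\lambda_2$ can be chosen small enough (and $N$ large enough) so that the resulting $\delta'\in(0,\delta)$ is strictly positive and the intermediate bumps inherit genuinely adapted decay, and that the six cubes $I_1,\dots,I_6$ listed in the proposition suffice to collect every $F$-factor produced by the kernel and weak compactness estimates. A delicate point is that the cancellation $T(1)=0$ must be used in a quantitative, localised form so as to remain compatible with the compactness decay carried by $L, S, D$, rather than merely with the scale-invariance of the classical $T(1)$ theorem.
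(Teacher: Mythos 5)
The paper itself does not prove Proposition~\ref{symmetricspecialcancellation}; it is quoted from \cite{V1}, with the introduction remarking that the one-dimensional argument of \cite{V1} carries over to $\mathbb R^{d}$. So there is no in-paper proof to line up against. That said, your outline matches the architecture that the statement of the proposition encodes. The choice of $I_4$, $I_5$, $I_6$ at scales $\diam(I\cup J)$, $\ell(K_{max})\bigl(\diam(I\cup J)/\ell(K_{min})\bigr)^{\theta}$ and $\ell(K_{min})^{1-\theta}\diam(I\cup J)^{\theta}$, all centred at $c(K_{min})$, is exactly the signature of the nested cut-offs you describe for the $\ell(J)\ll\ell(I)$ regime: an outer annulus handled by the compact kernel estimate after subtracting $K(\cdot,c(J))$, an intermediate ring controlled by the regularity of $\psi_I$ at scale $\ell(I)$, and an inner core where the $T(1)=0$ reduction and the weak compactness condition are applied to a pair of adapted bumps. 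The kernel-separated and comparable-scale regimes you list first are dispatched just as you say, and the $\epsilon$-slack indeed comes from Definition~\ref{WB}.

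One small imprecision worth noting: Lemma~\ref{definecmo} only establishes that the limit defining $T(1)$ exists and is independent of the cut-off; it carries no quantitative decay. The quantitative localisation you need to ``remain compatible with the compactness decay'' is supplied by the compact Calder\'on--Zygmund estimate~\eqref{CCZ} (which controls the error made when replacing the truncated plateau by $1$) together with the admissible functions $L_K,S_K,D_K$ and the weak compactness condition itself, not by the lemma. With that substitution, and subject to the genuine bookkeeping you correctly flag (choosing $\theta$ small and $N$ large so that a positive $\delta'<\delta$ survives, and verifying that the six cubes absorb every $F$-factor produced), your sketch is the right one.
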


\section{Localization properties of bump functions}

In this section, 
we prove two technical results. Lemma \ref{bumpsinteracting}  concerns  the localization of multi-variable bump functions 
while  Lemma \ref{lowoscillation}   estimates 
 the interaction of bump functions with atoms. 
The proofs of both results 
in the one-dimensional case can be found in \cite{TLec}. See also \cite{V1} for a more detailed proof of the latter result.

\begin{lemma}\label{bumpsinteracting}
Let $\phi_{I}$ and $\psi_{J}$ be bump functions $L^2$-adapted to $I$ and $J$ respectively with order 
$N\geq d$ and constant $C$. For $\ell(J)\leq \ell(I)$, then
\begin{equation}\label{1/2}
|\langle \phi_{I}, \psi_{J}\rangle |\lesssim  C^{2}\bigg(\frac{|J|}{|I|}\bigg)^{\frac{1}{2}}  \bigg(1+ \frac{|c(I)-c(J)|_{\infty }}{\ell(I)}\bigg)^{-N}.
\end{equation}
If, in addition, $\phi_{I}$ and $ \psi_J$  have order $N>d$ and $\psi_{J}$ has vanishing mean, i.e., $\int \psi_{J}(x)dx=0$, then
\begin{equation}\label{1/2+1/d}
|\langle \phi_{I}, \psi_{J}\rangle|\lesssim  C^{2}\bigg(\frac{|J|}{|I|}\bigg)^{\frac{1}{2} + \frac{1}{d}}
\bigg(1+ \frac{|c(I)-c(J)|_{\infty }}{\ell(I)}\bigg)^{-N+d}.
\end{equation}
\end{lemma}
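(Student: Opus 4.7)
\medskip
\noindent\textbf{Proof plan.}
The strategy is to apply the pointwise adapted bounds of Definition~\ref{adapted} and reduce each inequality to a computable integral. In part (1), the product of the two adapted weights already carries the required diagonal decay, extracted via a simple triangle-inequality trick; in part (2), the vanishing mean of $\psi_J$ lets us replace $\phi_I(x)$ by the increment $\phi_I(x)-\phi_I(c(J))$, and a first-order Taylor estimate supplies the additional factor $\ell(J)/\ell(I)=(|J|/|I|)^{1/d}$.

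For part (1), I would start from
\begin{equation*}
|\langle \phi_I,\psi_J\rangle|\leq \frac{C^2}{|I|^{1/2}|J|^{1/2}}\int_{\R^d}\Bigl(1+\tfrac{|x-c(I)|_{\infty}}{\ell(I)}\Bigr)^{-N}\Bigl(1+\tfrac{|x-c(J)|_{\infty}}{\ell(J)}\Bigr)^{-N}dx.
\end{equation*}
The crucial elementary observation, which uses $\ell(J)\leq \ell(I)$, is that
\begin{equation*}
\Bigl(1+\tfrac{|x-c(I)|_{\infty}}{\ell(I)}\Bigr)\Bigl(1+\tfrac{|x-c(J)|_{\infty}}{\ell(J)}\Bigr)\geq 1+\tfrac{|x-c(I)|_{\infty}+|x-c(J)|_{\infty}}{\ell(I)}\geq 1+\tfrac{|c(I)-c(J)|_{\infty}}{\ell(I)}.
\end{equation*}
Raising this to a power $N-d-1$ extracts the desired decay factor $(1+|c(I)-c(J)|_{\infty}/\ell(I))^{-(N-d-1)}$, while the remaining $d+1$ powers of the $\psi_J$-weight are integrable in $\R^d$ and produce a factor $\lesssim |J|$. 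Relabeling the initial order (as the paper allows), one arrives at the claimed inequality with exponent $-N$.

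For part (2), I would use that $\int\psi_J=0$ to write
\begin{equation*}
\langle \phi_I,\psi_J\rangle=\int_{\R^d}(\phi_I(x)-\phi_I(c(J)))\,\overline{\psi_J(x)}\,dx,
\end{equation*}
and apply the integral mean value theorem together with the adapted bound for $|\partial^{\alpha}\phi_I|$ with $|\alpha|=1$:
\begin{equation*}
|\phi_I(x)-\phi_I(c(J))|\leq |x-c(J)|_{\infty}\cdot\sup_{\xi\in[c(J),x]}\|\nabla\phi_I(\xi)\|_1\lesssim \frac{C\,|x-c(J)|_{\infty}}{|I|^{1/2}\ell(I)}\sup_{\xi}\Bigl(1+\tfrac{|\xi-c(I)|_{\infty}}{\ell(I)}\Bigr)^{-N}.
\end{equation*}
The improvement over part (1) is the extra factor $|x-c(J)|_{\infty}/\ell(I)$; after integrating against $|\psi_J|$ this produces an additional $\ell(J)/\ell(I)=(|J|/|I|)^{1/d}$, exactly matching the target exponent $1/2+1/d$. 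To run part (1)'s machinery on the supremum, I would split $\R^d$ according to whether $|x-c(J)|_{\infty}\leq \tfrac12\max(\ell(I),|c(I)-c(J)|_{\infty})$: in this "near" region every $\xi$ on the segment satisfies $|\xi-c(I)|_{\infty}\gtrsim \max(\ell(I),|c(I)-c(J)|_{\infty})$, so the sup is controlled by $(1+|c(I)-c(J)|_{\infty}/\ell(I))^{-N}$; in the "far" region the gradient bound is used with a trivial constant, but $|x-c(J)|_{\infty}/\ell(J)$ is then large and the extra decay of $\psi_J$ absorbs both the missing factor at $c(I)$ and the integration.

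The main obstacle is precisely this far-region argument in part (2): once $|x-c(J)|_{\infty}$ is comparable to or larger than $|c(I)-c(J)|_{\infty}+\ell(I)$, the pointwise bound on $\nabla\phi_I$ along the segment degenerates and the gain from the vanishing mean seems to disappear. The resolution is to trade some powers of the $\psi_J$-weight for the required separation decay, using the same product inequality as in part (1) but now with the extra polynomial factor $|x-c(J)|_{\infty}$ built in; this is the only place where the order has to be taken strictly larger than $d$, consistent with the assumption $N>d$ in the statement.
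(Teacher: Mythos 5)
The approach you take is genuinely different from the paper's, and it works, though with slightly weaker exponents. For inequality \eqref{1/2}, the paper splits $\R^{d}$ along the perpendicular bisector hyperplane $H$ of the segment $[c(I),c(J)]$ and applies H\"older on each half-space, directly obtaining the decay exponent $-N$. Your product inequality for the two adapted weights, with $d+1$ powers of the $\psi_J$-weight reserved for integrability, gives $-(N-d-1)$ instead; you correctly flag that this is fixable by relabeling the order. For inequality \eqref{1/2+1/d}, the paper uses the vanishing mean to build an antiderivative $D^{-1}$ in the coordinate $e_1$ realizing $|c(I)-c(J)|_{\infty}$, integrates by parts against $\partial_1\phi_I$, applies the mean value theorem for integrals in the remaining $d-1$ variables, and then invokes the one-dimensional case of \eqref{1/2}; this is what produces the sharp exponent $-N+d$ together with the full factor $(|J|/|I|)^{1/2+1/d}$. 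You instead write $\phi_I(x)-\phi_I(c(J))$ and use the gradient mean value theorem, with a near/far split. That is a legitimate and more elementary alternative, but be aware of two points. First, the near-region claim that every $\xi$ on the segment satisfies $|\xi-c(I)|_{\infty}\gtrsim\max(\ell(I),|c(I)-c(J)|_{\infty})$ is only correct when $|c(I)-c(J)|_{\infty}\gtrsim\ell(I)$; in the complementary regime $1+|c(I)-c(J)|_{\infty}/\ell(I)\approx 1$ so no decay is needed, and this should be said explicitly. Second, in the far region the trivial gradient bound together with the extra $|x-c(J)|_{\infty}$ factor costs one additional power of decay beyond what \eqref{1/2+1/d} states, and the tail integral $\int |u|_{\infty}(1+|u|_{\infty})^{-N}\,du$ requires $N>d+1$ rather than $N>d$. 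Both of these are absorbed by the same relabeling you already invoke for part (1), so the plan is sound; the paper's one-dimensional reduction is simply the way to get the exponents exactly as stated.
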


\begin{proof}
We start by proving inequality (\ref{1/2}).
Let $c$ be the midpoint between $c(I)$ and $c(J)$, let $L$ be the line going through $c(I)$ and $c(J)$ and let $H\subset \mathbb R^{d}$ be the hyperplane perpendicular to $L$  passing through $c$.
Let also $H_{I}$ and $H_{J}$ be the two half-spaces defined by the connected components of $\mathbb R^{d}\backslash H$ so that $c(I) \in H_{I}$ and  $c(J) \in H_{J}$. We split
$$
\langle \phi_{I},\psi_{J}\rangle = \int_{H_{I}}\phi_{I}(x)\overline{\psi_{J}(x)}dx
+\int_{H_{J}}\phi_{I}(x)\overline{\psi_{J}(x)}dx.
$$
Applying H\"older's inequality and Definition \ref{adapted}, we get 
\begin{multline} \label{the start}
|\langle \phi_{I},\psi_{J}\rangle | \leq \| \phi_{I}\|_{L^{1}(\mathbb R^{d})}\| \psi_{J}\|_{L^{\infty }(H_{I})}
+\| \phi_{I}\|_{L^{\infty }(H_{J})} \| \psi_{J}\|_{L^{1}(\mathbb R^{d})}
\\
\leq  C^{2}\bigg(\frac{|I|}{|J|}\bigg)^{\frac{1}{2}} \bigg(1 + \frac{|c-c(J)|_{\infty }}{\ell(J)}\bigg)^{-N}
+C^{2}\bigg(\frac{|J|}{|I|}\bigg)^{\frac{1}{2}}  \bigg(1 + \frac{|c-c(I)|_{\infty }}{\ell(I)}\bigg)^{-N}.
\end{multline}
Since $|c-c(I)|_{\infty }=|c-c(J)|_{\infty }=|c(I)-c(J)|_{\infty }/2$, $\ell(J)\leq \ell(I)$ and  $N\geq d$, we have that the first term is smaller than the second one, which is of the desired form. 

\vskip10pt
To prove (\ref{1/2+1/d}), 
we assume without loss of generality that 
$|c(I)-c(J)|_{\infty }=|c(I_{1})-c(J_{1})|$. Then, for all $x\in \mathbb R^{d}$ we   write $x=(x_{1},x')$ with $x'\in \mathbb R^{d-1}$.
We define the operators
$$
D^{-1}_{1}(\psi_{J})(x)=\int_{-\infty }^{x_1}\psi_{J}(s,x')ds
$$
and, for $t \in \R$, 
$$
D^{-1}(\psi_{J})(t)=\int_{\mathbb R^{d-1}}\int_{-\infty }^{t}\psi_{J}(x_1,x')dx_1dx'
=\int_{te_{1}+H^{-}_{1}}\psi_{J}(x)dx,
$$
where $H^{-}_{1}=\{ x\in \mathbb R^{d}: x_{1}\leq 0\}$. 
Note that, due to the vanishing mean  of 
$\psi_{J}$, we have 
\begin{equation} \label{the switch}
D^{-1}(\psi_{J})(t)=-\int_{\mathbb R^{d-1}}\int_{t}^{\infty }\psi_{J}(x_1,x')dsdx'
=-\int_{te_{1}+H_{1}^{+}}\psi_{J}(x)dx,
\end{equation}
where $H_{1}^{+}=\{ x\in \mathbb R^{d}: 0\leq x_{1}\} $. 
Then, it is readily   checked that 
\begin{align*}
\langle \phi_{I}, \psi_{J}\rangle &=
- \int_{\mathbb R^{d}}\partial_{1}\phi_{I}(x) \cdot 
D^{-1}_{1}\overline{\psi_{J}}(x)dx.
\end{align*}
Now, the function $D^{-1}_{1}\overline{\psi_{J}}$ can be expressed as the sum of four positive functions
$D^{-1}_{1}\overline{\psi_{J}}=f_{0}-f_{2}+i(f_{1}-f_{3})=\sum_{k=0}^{3}i^{k}f_{k}$. Hence, 
applying the Mean Value Theorem for integrals to each positive function $f_{k}$ with respect to the variable $x' \in \R^{d-1}$, we obtain
\begin{align*}
\langle \phi_{I}, \psi_{J}\rangle 
&=- \sum_{k=0}^{3}i^{k}\int_{\mathbb R}\partial_{1}\phi_{I}\big(x_{1},g_{k}(x_{1})\big)
\bigg(\int_{\mathbb R^{d-1}}f_{k}(x)dx'\bigg)dx_{1},
\end{align*}
where the functions $g_{k}$   denote the dependence of all coordinates from $x_{1}$.
Hence, 
$$
|\langle \phi_{I}, \psi_{J}\rangle |
\leq \int_{\mathbb R}\sup_{k}|\partial_{1}\phi_{I}(x_{1},g_{k}(x_{1}))|
\bigg(\sum_{k=0}^{3}\int_{\mathbb R^{d-1}}f_{k}(x)dx'\bigg)dx_{1}.
$$
Since
\begin{align*}
D^{-1}\overline{\psi_{J}}(t) &=\int_{\mathbb R^{d-1}}\hspace{-.3cm}D^{-1}_{1}\overline{\psi_{J}}(t, x')dx'
=\sum_{k=0}^{3}i^{k}\int_{\mathbb R^{d-1}}\hspace{-.3cm}f_{k}(t,x')dx' 
=\sum_{k=0}^{3}i^{k}F_{k}(t),
\end{align*}
we have that
\begin{align*}
\sum_{k=0}^{3}\int_{\mathbb R^{d-1}}f_{k}(t,x')dx'
&\leq 2^{1/2}\bigg(\Big(\sum_{k \hskip2pt {\rm even}}F_{k}(t)\Big)^{2}
+\Big(\sum_{k \hskip2pt {\rm odd}}F_{k}(t)\Big)^{2}\bigg)^{1/2}
\\
&=2^{1/2}|D^{-1}\overline{\psi_{J}}(t)|.
\end{align*}
Therefore, we can write
\begin{equation}\label{finalform2bound}
	|\langle \phi_{I}, \psi_{J}\rangle_{L^2(\R^d)} | \lesssim \langle \sup_k |\partial_{1}\phi_{I}(t,g_{k}(t))|,  |D^{-1}(\psi_{J})(t)| \rangle_{L^2(\R)}.
\end{equation}

Now, on the one hand, we have by Definition \ref{adapted},
\begin{align*}
|\partial_{1}\phi_{I}(t,g_{k}(t))|
&\leq  \frac{C}{|I|^{\frac{1}{2}}\ell(I)}   \bigg(1+ \frac{|(t,g_{k}(t))-c(I)|_{\infty }}{\ell(I)}\bigg)^{-N}
\\
&\leq  \frac{C}{|I|^{\frac{1}{2}}\ell(I)^{\frac{1}{2}}} \frac{1}{\ell(I_{1})^{\frac{1}{2}}} \bigg(1+ \frac{|t-c(I_{1})|}{\ell(I_{1})}\bigg)^{-N}.
\end{align*}
This is the decay estimate in   Definition \ref{adapted} of a function being adapted to the interval $I_{1}$ with 
constant $C|I|^{-1/2}\ell(I)^{-1/2}$.

On the other hand, 
to control the second factor in \eqref{finalform2bound}, we make the following computation: since $|\cdot |_{1}\leq d|\cdot |_{\infty }$, 
\begin{align*}
|D^{-1}(\overline{\psi_{J}})(t)|
&
\leq \frac{C}{|J|^{\frac{1}{2}}} \int_{te_{1}+H^{+}_{1}} \bigg(1+\frac{|x-c(J)|_{\infty }}{\ell(J)}\bigg)^{-N}\mathrm{d}x
\\
&\leq  \frac{C}{|J|^{\frac{1}{2}}}\int_{te_{1}-c(J)+H^{+}_{1}}\bigg(1+\frac{|x|_{1}}{\ell(J)d}\bigg)^{-N}\mathrm{d}x
\\
&= \frac{C}{|J|^{\frac{1}{2}}}\int_{\mathbb R^{d-1}}\int_{t-c(J_{1})}^{\infty } \bigg(1+\frac{|x_{1}|+|x'|_{1}}{ \ell(J)d }\bigg)^{-N}\mathrm{d}x_{1}\mathrm{d}x'
\\
&\lesssim \frac{C}{|J|^{\frac{1}{2}}}  \ell(J)^{d}d^{d}   \bigg(1+\frac{|t-c(J_{1})|}{\ell(J)d}\bigg)^{-N+d} 
\\
&
\leq C d^{N} |J|^{\frac{1}{2}} \bigg(d+ \frac{|t-c(J_{1})|}{\ell(J)}\bigg)^{-N+d}
\\
&\lesssim C |J|^{\frac{1}{2}}\ell(J)^{\frac{1}{2}}\frac{1}{\ell(J_{1})^{\frac{1}{2}}} 
\bigg(1+\frac{|t-c(J_{1})|}{\ell(J_{1})}\bigg)^{-N+d}.
\end{align*}
Here, we tacitly assumed that $t - c(J_1)>0$. If the opposite is true,  we use \eqref{the switch} in the first line of the argument to make the same calculation work.
We note that this is the decay estimate in   Definition \ref{adapted} of a function being adapted to the interval $I_{1}$ with constant $C|J|^{1/2}\ell(J)^{1/2}$.

Now, we combine the above two estimates.
Repeating the proof of \eqref{1/2} in the one-dimensional case for the expression in \eqref{finalform2bound}, starting with the splitting in \eqref{the start}, we obtain, for $N>d$, the bound
\begin{align*}
|\langle \phi_{I}, \psi_{J}\rangle |
&\lesssim C^{2}  \,  \bigg( \frac{ |J| \ell(J) }{ |I| \ell(I)} \bigg)^{\frac{1}{2}}  
\Big(\frac{\ell(J)}{\ell(I)}\Big)^{\frac{1}{2}}\bigg(1+ \frac{|c(I_{1})-c(J_{1})|}{  \ell(I)  }\bigg)^{-N+d}
\\
&= C^{2}\Big(\frac{|J|}{|I|}\Big)^{\frac{1}{2}+\frac{1}{d}}\bigg(1+  \frac{|c(I)-c(J)|_{\infty }}{  \ell(I) }\bigg)^{-N+d}.
\end{align*}
\end{proof}

\begin{lemma}\label{lowoscillation}
Let $I$ be a cube and $f$ be an integrable function supported
on $I$ with mean zero. For each dyadic cube $J$, let $\phi_{J}$ be a bump function adapted
to $J$ with constant $C>0$ and order 
$N$. 

Then, for all dyadic cubes $J$ such that 
$\ell(J)\leq \ell(I)$, we have
\begin{equation}\label{L1control1}
|\langle f, \phi_{J} \rangle | |J|^{\frac{1}{2}}
\lesssim C\| f\|_{L^{1}(\mathbb R^{d})}
\Big( 1+\frac{|c(I)-c(J)|_{\infty }}{\ell(I)}\Big)^{-N},
\end{equation}
while for $\ell(I)\leq \ell(J)$, we get
\begin{equation}\label{L1control2}
|\langle f, \phi_{J} \rangle | |J|^{\frac{1}{2}}
\lesssim C\| f\|_{L^{1}(\mathbb R^{d})}\frac{\ell(I)}{\ell(J)}
\Big( 1+\frac{|c(I)-c(J)|_{\infty }}{\ell(J)}\Big)^{-N}.
\end{equation}

\end{lemma}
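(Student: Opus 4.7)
The plan is to treat the two cases $\ell(J)\leq\ell(I)$ and $\ell(I)\leq\ell(J)$ separately, exploiting the mean zero hypothesis only in the second (harder) case.

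For the inequality \eqref{L1control1}, where $\ell(J)\leq \ell(I)$, I would not use the mean zero condition on $f$ at all. The plan is simply to estimate $\phi_J$ pointwise on $I$ and apply the trivial bound $|\langle f,\phi_J\rangle|\leq \|f\|_{L^1}\sup_{x\in I}|\phi_J(x)|$. Definition \ref{adapted} (with $p=2$) gives $|\phi_J(x)|\leq C|J|^{-1/2}(1+|x-c(J)|_{\infty}/\ell(J))^{-N}$. Since $\ell(J)\leq\ell(I)$, the $(\cdot)^{-N}$ factor is dominated by $(1+|x-c(J)|_{\infty}/\ell(I))^{-N}$. A short triangle-inequality case split (treating the regimes $|c(I)-c(J)|_{\infty}\leq \ell(I)$ and $|c(I)-c(J)|_{\infty}>\ell(I)$ separately, using $|x-c(I)|_{\infty}\leq \ell(I)/2$ for $x\in I$) then shows this last quantity is $\lesssim (1+|c(I)-c(J)|_{\infty}/\ell(I))^{-N}$ uniformly for $x\in I$, which yields \eqref{L1control1} after multiplying by $|J|^{1/2}$.

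For the inequality \eqref{L1control2}, where $\ell(I)\leq \ell(J)$, I would use the mean zero condition in the form
\[
\langle f,\phi_J\rangle=\int_I f(x)\bigl(\phi_J(x)-\phi_J(c(I))\bigr)\,dx,
\]
and estimate the difference $\phi_J(x)-\phi_J(c(I))$ by the mean value theorem along the segment from $c(I)$ to $x$, which lies entirely in $I$ by convexity. The $|\alpha|=1$ case of Definition \ref{adapted} gives $|\nabla\phi_J(z)|\lesssim C|J|^{-1/2}\ell(J)^{-1}(1+|z-c(J)|_{\infty}/\ell(J))^{-N}$, and for $z\in I$ one has $|x-c(I)|_{\infty}\leq \ell(I)/2\leq\ell(J)/2$, so $|z-c(J)|_{\infty}\gtrsim |c(I)-c(J)|_{\infty}$ by the same triangle-inequality argument as before (now measured at scale $\ell(J)$). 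Combining these ingredients yields $|\phi_J(x)-\phi_J(c(I))|\lesssim C|J|^{-1/2}(\ell(I)/\ell(J))(1+|c(I)-c(J)|_{\infty}/\ell(J))^{-N}$ uniformly in $x\in I$, and integrating against $|f|$ gives \eqref{L1control2}.

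The proof is essentially routine; the only point requiring a little care is the geometric claim that for $z\in I$ the quantity $1+|z-c(J)|_{\infty}/\ell$ is comparable to $1+|c(I)-c(J)|_{\infty}/\ell$ (with $\ell=\ell(I)$ in the first case and $\ell=\ell(J)$ in the second). This is the one spot where one has to split into cases depending on whether $|c(I)-c(J)|_{\infty}$ is smaller or larger than $\ell$, and it is what forces the use of the $\ell^{\infty}$-norm notion of eccentricity/distance maintained throughout the paper. Everything else is a direct application of Hölder and the adapted bump estimates.
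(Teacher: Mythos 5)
Your proof is correct, and for \eqref{L1control2} it takes a genuinely different route than the paper's. For \eqref{L1control1} the two arguments are essentially the same elementary estimate dressed differently: you use a direct triangle-inequality comparison showing that $1+|x-c(J)|_\infty/\ell(I)$ is uniformly comparable over $x\in I$ to $1+|c(I)-c(J)|_\infty/\ell(I)$, while the paper separates the near case $|c(I)-c(J)|_\infty\leq 2\ell(I)$ (bare H\"older) from the far case, which it handles with a separating hyperplane; both boil down to the same two regimes, and neither uses the mean-zero hypothesis here. For \eqref{L1control2}, however, you subtract the constant $\phi_J(c(I))$ (legitimate since $\int f=0$) and apply the mean value theorem to $\phi_J$ along the segment $[c(I),x]\subset I$, obtaining the gain $\ell(I)/\ell(J)$ from $|x-c(I)|\cdot\ell(J)^{-1}$. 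The paper instead moves the derivative onto $\phi_J$ by a one-dimensional integration by parts: it forms the antiderivative $D^{-1}f$ in the distinguished coordinate, uses the mean-zero hypothesis to see that $D^{-1}f$ is compactly supported with $\|D^{-1}f\|_{L^1(\mathbb R)}\leq \ell(I_1)\|f\|_{L^1(\mathbb R^d)}$, and then pairs it against $\partial_1\phi_J$ via the positive-decomposition and hyperplane devices already introduced for Lemma \ref{bumpsinteracting}. Your argument is shorter and more elementary; the paper's version has the structural advantage of reusing verbatim the machinery set up for the preceding lemma. Both deliver the stated bound, with the same requirement $N\geq 1$.
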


\begin{proof}

The proofs of both inequalities follow the pattern from the previous lemma with the required modifications to take advantage of the compact support of $f$. 

In order to prove \eqref{L1control1}, we divide the argument into two cases. 
When $|c(I)-c(J)|_{\infty }\leq 2\ell(I)$, the inequality follows from   H\"older:
$$
|\langle f,\varphi_{J}\rangle |\leq \| f\|_{L^{1}(\mathbb R^{d})}\| \varphi_{J}\|_{L^{\infty }(\mathbb R^{d})}
\leq C\| f\|_{L^{1}(\mathbb R^{d})}\frac{1}{|J|^{1/2}}.
$$
When $|c(I)-c(J)|_{\infty }>2\ell(I)$, we denote
by $c$ the midpoint between $c(I)$ and $c(J)$, and by $H\subset \mathbb R^{d}$ the hyperplane 
passing through $c$ and perpendicular to the line containing $c(I)$ and $c(J)$.
Let also $H_{J}$ be the half-space defined by the connected component of $\mathbb R^{d}\backslash H$ so that 
$c(J) \in H_{J}$. 
It can be readily checked that  $I\cap H_{J}=\emptyset$, whence
  $\supp f\subset I\subset H_{J}^{c}$, and thus,
\begin{align*}
|\langle f, \phi_{J} \rangle | 
&\leq \int_{H_{J}^{c}}|f(x)| |\phi_{J}(x)|dx
\\
&\leq  \| f\|_{L^{1}(\mathbb R^{d})}\| \phi_{J}\|_{L^{\infty }(H_{J}^{c})}
\\
&\leq \| f\|_{L^{1}(\mathbb R^{d})} \frac{C}{|J|^{1/2}}\bigg( 1+\frac{|c(I)-c(J)|_{\infty }}{\ell(J)}\bigg)^{-N},
\end{align*}
which is smaller than the bound in \eqref{L1control1} since $\ell(J)\leq \ell(I)$.

To prove  \eqref{L1control2}, we divide in two similar cases. 
We first assume that $|c(I)-c(J)|_{\infty }\geq 2\ell(J)$. 
Let $c = (c_1,c') \in \R \times \R^{d-1}$ be the midpoint between $c(I)$ and $c(J)$ and let $H$ and $H_{J}$ be as before. 
Since now $\ell(I)\leq \ell(J)$, we have again that 
$c(J) \in H_{J}$ and $\supp f\subset H_{J}^{c}$. 

As in the previous lemma, we assume without loss of generality that 
$|c(I)-c(J)|_{\infty }=|c(I_{1})-c(J_{1})|$.
Then, we consider again the operator
$$
D^{-1}(f)(t)
=\int_{te_{1}+H^{-}_{1}}f(x)dx
=-\int_{te_{1}+H_{1}^{+}}f(x)dx,
$$
where $H^{-}_{1}=\{ x\in \mathbb R^{d}: x_{1}\leq 0\}$ and 
$H_{1}^{+}=\{ x\in \mathbb R^{d}: 0\leq x_{1}\} $, due to the vanishing mean  of $f$. Moreover, 
the support of $D^{-1}f$ is included in $I_{1}$, which is, in turn, included in $(-\infty ,c_{1})$.

From the computations developed in the proof of (\ref{1/2+1/d}), we have that
$$
\langle f, \phi_{I}\rangle 
= - \sum_{k=0}^{3}i^{k}\int_{\mathbb R} \bigg(\int_{\mathbb R^{d-1}}
f_{k}(x_{1},x')dx' \bigg)\partial_{1}\overline{\phi_{J}}\big(x_{1},g_{k}(x_{1})\big)dx_{1},
$$
where $D^{-1}_{1}f
=\sum_{k=0}^{3}i^{k}f_{k}$, with $f_{k}$ positive functions and  
the functions $g_{k}$  denote the dependence of all coordinates from $x_{1}$.
Now, as in the proof of Lemma \ref{bumpsinteracting}, we have the inequalities
$$
|\partial_{1}\overline{\phi_{J}}(t,g_{k}(t))|
\leq \frac{C}{|J|^{1/2}\ell(J)} \bigg(1+\frac{|t-c(J_{1})|}{ \ell(J) }\bigg)^{-N},
$$
and
$$
\sum_{k=0}^{3}\int_{\mathbb R^{d-1}}f_{k}(t,x')dx'
\leq 2^{1/2}|D^{-1}f(t)|.
$$
Let  $\varphi_{J}(t)=\sup_{k}|\partial_{1}\overline{\phi_{J}}(t,g_{k}(t))|$. Then,  
\begin{align*}
|\langle f, \phi_{J} \rangle | 
&\lesssim \int_{-\infty }^{c_1}|D^{-1}f(t)| \varphi_{J}(t)dt
\leq  \| D^{-1}f\|_{L^{1}(-\infty ,c_{1})}\| \varphi_{J}\|_{L^{\infty }(-\infty ,c_{1})}
\\
& \leq \| D^{-1}f\|_{L^{1}(\mathbb R)} \frac{C}{|J|^{1/2}\ell(J)} \bigg( 1+\frac{|c(I_{1})-c(J_{1})|}{\ell(J)}\bigg)^{-N}.
\end{align*}
Now, from the bound $\| D^{-1}f\|_{L^{1}(\mathbb R)}\leq \ell(I_{1})|\| f\|_{L^{1}(\mathbb R^{d})}$ and 
the assumption about the first coordinate, we obtain the bound stated in \eqref{L1control2}.

Finally, when $|c(I)-c(J)|\leq 2\ell(J)$, we use the easier estimate
\begin{align*}
|\langle f,\phi_{J}\rangle |
& \lesssim \int_{-\infty }^{c_1}|D^{-1}f(t)| \varphi_{J}(t)dt
\leq \| D^{-1}f\|_{L^{1}(\mathbb R)}\| \varphi_{J}\|_{L^{\infty }(\mathbb R)}\\
&\leq \ell(I)\| f\|_{L^{1}(\mathbb R^{d})}C|J|^{-1/2}\ell(J)^{-1}.
\end{align*}
This ends the proof.
\end{proof}

\section{Compactness from $L^{1}(\mathbb R^{d})$ into $L^{1,\infty}(\mathbb R^{d})$} \label{L1weak}

In this section, we state and prove our main result. 

\begin{theorem}\label{theoweakL1}
Let $T$ be a linear operator associated with a standard Calder\'on-Zygmund kernel.
Then, $T$ can be extended to a compact operator from $L^{1}(\mathbb R^{d})$ into $L^{1,\infty }(\mathbb R^{d})$
if and only if  it is associated with a compact Calder\'on-Zygmund kernel satisfying 
the weak compactness condition and the cancellation conditions 
$T(1), T^{*}(1)\in \CMO(\mathbb R^{d})$.
\end{theorem}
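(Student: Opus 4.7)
\emph{Necessity.} Assume $T\colon L^{1}(\mathbb R^{d})\to L^{1,\infty }(\mathbb R^{d})$ is compact. Then $T$ is in particular bounded, so Theorem \ref{fromweak2strong} supplies boundedness on $L^{p}(\mathbb R^{d})$ for every $1<p<\infty $. Fix some $p_{0}>1$ and apply Theorem \ref{compactinterpolation} to the quasi-Banach couples $(L^{1},L^{p_{0}})$ and $(L^{1,\infty },L^{p_{0}})$: since the real interpolation spaces identify with Lorentz spaces $L^{p_{\theta },q}$, an appropriate choice of parameters yields $T\colon L^{p}(\mathbb R^{d})\to L^{p}(\mathbb R^{d})$ compact for some $1<p<p_{0}$. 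Theorem \ref{Mainresult} then delivers the required conclusions: the compact Calder\'on--Zygmund kernel property, the weak compactness condition, and $T(1),T^{*}(1)\in \CMO(\mathbb R^{d})$.

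\emph{Sufficiency.} The strategy is to adapt the classical Calder\'on--Zygmund proof of the $L^{1}\to L^{1,\infty }$ inequality, exploiting the extra decay supplied by the compact kernel, the weak compactness condition, and the $\CMO$ cancellation. A preliminary paraproduct reduction, absorbing $T(1)$ and $T^{*}(1)$ into two operators whose $L^{1}\to L^{1,\infty }$ compactness follows directly from $T(1),T^{*}(1)\in \CMO$ (via the wavelet characterization in Lemma \ref{lemacmochar}), brings us to the case $T(1)=T^{*}(1)=0$, for which Proposition \ref{symmetricspecialcancellation} is available. Given a bounded sequence $(f_{n})\subset L^{1}(\mathbb R^{d})$ we seek a subsequence for which $(Tf_{n_{k}})$ is Cauchy in $L^{1,\infty }(\mathbb R^{d})$. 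At each threshold $\lambda $ perform the Calder\'on--Zygmund decomposition $f_{n}=g_{n}^{\lambda }+\sum_{Q}b_{Q,n}^{\lambda }$ with $g_{n}^{\lambda }$ bounded and lying in $L^{2}$ and each $b_{Q,n}^{\lambda }$ a mean-zero atom supported in a stopping cube. For the good part, the $L^{2}$-compactness supplied by Theorem \ref{Mainresult} together with a diagonal extraction produces a subsequence for which $(Tg_{n_{k}}^{\lambda })$ converges in $L^{2}$, hence in $L^{1,\infty }$.

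For the bad part, discard the exceptional set $\bigcup _{Q}2Q$, whose measure is $\lesssim \lambda ^{-1}\| f_{n}\|_{1}$, and estimate $\sum _{Q}Tb_{Q,n}^{\lambda }$ on the complement by expanding everything in a wavelet basis $(\psi_{J}^{i})$. Each pairing $\langle Tb_{Q,n}^{\lambda },\psi_{J}^{i}\rangle $ splits, via an intermediate wavelet expansion of the atom, into atom--wavelet factors $\langle b_{Q,n}^{\lambda },\psi_{K}\rangle $, controlled by Lemma \ref{lowoscillation}, and wavelet--wavelet factors $\langle T\psi_{K},\psi_{J}^{i}\rangle $, controlled by Proposition \ref{symmetricspecialcancellation}. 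Summing the resulting geometric series over scales and invoking Lemma \ref{lesizzy} on those terms where either $J$ or $K$ lies outside $\mathcal D_{M}$ shows that, for $M$ large, the contribution to the $L^{1,\infty }$-measure is small uniformly in $n$. The remaining terms, with both $J$ and $K$ in $\mathcal D_{M}$, live in a finite-dimensional wavelet span on which ordinary finite-dimensional precompactness applies. A diagonal argument over a countable sequence of levels $\lambda $ and parameters $M$ produces the desired convergent subsequence.

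\emph{Main obstacle.} Example \ref{counterexample} explicitly rules out the most natural strategy, namely showing $\| P_{M}^{\perp }T\| _{L^{1}\to L^{1,\infty }}\to 0$, so one cannot simply approximate $T$ in operator norm by the finite-rank operators $P_{M}T$. The delicate point is therefore the bad part: the compact-kernel decay captured by the functions $F(I_{1},\ldots ,I_{6};M_{T,\epsilon })$ of Proposition \ref{symmetricspecialcancellation} must be combined with the mean-zero atom structure of the $b_{Q}$ and then summed over all stopping cubes so as to produce a genuine $L^{1,\infty }$-measure estimate on $\bigl(\bigcup _{Q}2Q\bigr)^{c}$ that tends to zero as $M\to \infty $. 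Executing this bookkeeping, and in particular tracking how the lagom parameter $M$, the stopping level $\lambda $, and the wavelet scales interact, is the technical crux of the argument and closely parallels the non-endpoint analysis carried out in \cite{V1}.
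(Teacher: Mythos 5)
Your necessity argument is essentially the paper's: boundedness on some $L^{p_0}$ via Theorem~\ref{fromweak2strong}, compact real interpolation via Theorem~\ref{compactinterpolation}, then the reverse implication of Theorem~\ref{Mainresult}; the paper simply fixes $p_0=4$ and $\theta=2/3$ to land exactly on $L^2$.

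For sufficiency you have identified the right ingredients---paraproduct reduction, Calder\'on--Zygmund decomposition, Lemma~\ref{lowoscillation}, Proposition~\ref{symmetricspecialcancellation}, Lemma~\ref{lesizzy}---but a misreading of Example~\ref{counterexample} leads you onto a needlessly hard path. The operator in that example has $T^*(1)=\psi_{[0,1]}\neq 0$, so it only shows that the operator-norm decay $\|P_M^\perp T\|_{L^1\to L^{1,\infty}}\to 0$ can fail \emph{without} the cancellation hypotheses. Once you perform the paraproduct reduction to $T(1)=T^*(1)=0$, as your outline does, this decay is exactly what holds: Proposition~\ref{weakL1} proves $\|P_{2M}^\perp T\|_{L^1\to L^{1,\infty}}\lesssim\epsilon$ for all large $M$, so $T$ is the operator-norm limit of the finite-rank operators $P_{2M}T$ and is compact by Remark~\ref{resizzy}; no sequences or diagonal extractions are required. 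Example~\ref{counterexample} only obstructs the paraproduct $T_b^*$, whose adjoint applied to $1$ is $b\neq 0$, and there the paper sidesteps it by writing $T_b^*=T_{P_Mb}^*+T_{P_M^\perp b}^*$ (the first term finite-rank, the second small because $b\in\CMO$). Your alternative sequence-plus-diagonalization scheme is conceptually salvageable but strictly more delicate than the route you discard: you must simultaneously track the CZ level $\lambda$, the lagom parameter $M$, and the sequence index, and you leave the central estimate---bounding $\sum_Q Tb_{Q,n}^{\lambda}$ off the exceptional set by combining the atom--wavelet factors from Lemma~\ref{lowoscillation} with the wavelet--wavelet factors from Proposition~\ref{symmetricspecialcancellation}, then running the case analysis on which of the six auxiliary cubes $J_1,\dots,J_6$ lie in $\mathcal D_M$---as deferred ``bookkeeping.'' That estimate is the bulk of the paper's proof of Proposition~\ref{weakL1}, so as written your proposal has a genuine gap at precisely that step.
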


\begin{remark}\label{fromptoendpoint} If $T$ is a linear operator with a standard Calder\'on-Zygmund kernel which can be extended compactly on $L^{p}(\mathbb R^{d})$ for any $1<p<\infty $ then, we know by 
Theorem \ref{Mainresult}
that $T$ satisfies the same three hypotheses for compactness of Theorem \ref{theoweakL1}
and so, it can also be extended as a compact operator from $L^{1}(\mathbb R^{d})$ into $L^{1,\infty }(\mathbb R^{d})$.
\end{remark}

We first justify the converse, which is
essentially a consequence of Theorem \ref{fromweak2strong} and compact real interpolation.  
We assume that $T$ is compact from $L^{1}(\mathbb R^{d})$ into $L^{1,\infty }(\mathbb R^{d})$. Since $T$ is bounded between the same spaces, by 
Theorem \ref{fromweak2strong},  we have that $T$ is also bounded on, say, $L^{4}(\mathbb R^{d})$. Then, by 
the interpolation Theorem \ref{compactinterpolation} with $\theta =\frac{1-1/2}{1-1/4}$, we obtain that 
$T$ is compact on $L^{2}(\mathbb R^{d})$. Now, the reverse implication of Theorem \ref{Mainresult} implies the required hypotheses: $T$ has a compact Calder\'on-Zygmund kernel and it satisfies both
the weak compactness condition and the cancellation conditions
$T(1), T^{*}(1) \in \CMO(\mathbb R^{d})$.

The remainder of the paper is devoted to show sufficiency. As in the study of boundedness, the proof is split into two cases: first a special case, when  extra cancellation properties are assumed (Proposition \ref{weakL1}); and second, the general case, which is dealt with by proving compactness of paraproducts (Proposition \ref{paraproducts1}).

\begin{proposition}\label{weakL1}
Let $T$ be a linear operator associated with a compact Calder\'on-Zygmund kernel 
satisfying 
the weak compactness condition and the cancellation conditions 
$T(1)=0$ and $T^{*}(1)=0$.
Then, $T$ is compact from $L^{1}(\mathbb R^{d})$ into $L^{1,\infty }(\mathbb R^{d})$.
\end{proposition}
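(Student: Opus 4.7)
The proof broadly follows the classical Calder\'on-Zygmund scheme for the weak-$(1,1)$ estimate, diverging from the standard argument precisely at the step where an orthogonal projection would appear, as flagged in the introduction. Given a bounded sequence $(f_n)\subset L^1(\R^d)$ with $\|f_n\|_1\le 1$, the plan is to extract a subsequence along which $(Tf_n)$ converges in $L^{1,\infty}(\R^d)$. For each level $\alpha_m=2^m$ and each $n$, I would apply the Calder\'on-Zygmund decomposition $f_n=g_n^m+b_n^m$, with $\|g_n^m\|_\infty\lesssim \alpha_m$, $\|g_n^m\|_2^2\lesssim \alpha_m$, and $b_n^m=\sum_{Q\in\mathcal{Q}_n^m}b_{n,Q}^m$ a sum of mean-zero atoms supported on disjoint dyadic cubes $Q$ whose total measure satisfies $\bigl|\bigcup_Q Q\bigr|\lesssim 2^{-m}$. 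The $L^{1,\infty}$ quasi-triangle inequality then reduces the problem to controlling $(Tg_n^m)$ and $(Tb_n^m)$ separately, coupling the two through a diagonal argument in $m$.

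For the good part, Theorem \ref{Mainresult} implies under our hypotheses that $T$ is compact on $L^p(\R^d)$ for every $1<p<\infty$. A diagonal extraction over $m$ therefore produces a subsequence $(f_{n_k})$ such that $(Tg_{n_k}^m)$ is Cauchy in $L^2$ for every fixed $m$, and this should be promoted to Cauchyness in the $L^{1,\infty}$-quasinorm using the uniform bound $\|g_n^m\|_1\le 2$ to control the distribution at small amplitudes. For the bad part, the classical weak-$(1,1)$ estimate only gives $\|Tb_n^m\|_{L^{1,\infty}}\lesssim 1$, so the needed smallness must come from the compactness of the kernel: I would test against bump functions, expand each atom $b_{n,Q}^m$ in the dual wavelet basis via Lemma \ref{lowoscillation}, and estimate matrix entries $\langle T\psi_I,\psi_J\rangle$ via Proposition \ref{symmetricspecialcancellation}, then split the resulting double sum by the lagom/non-lagom dichotomy. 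The non-lagom contribution is arbitrarily small by Lemma \ref{lesizzy} for $M$ large, while the lagom contribution, involving only finitely many configurations of cubes, is controlled by the vanishing total measure $\bigl|\bigcup_Q Q\bigr|\lesssim 2^{-m}$.

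The main obstacle is the failure, documented in Example \ref{counterexample}, of the naive operator-norm approximation $\|P_M^\perp T\|_{L^1\to L^{1,\infty}}\to 0$: one cannot simply invoke Theorem \ref{charofcompact} as in the $L^p$-compact setting. The smallness of the bad part must instead be extracted atom-by-atom, coordinating the atomic Calder\'on-Zygmund structure of $b_n^m$ with the cube-by-cube kernel smallness supplied by Proposition \ref{symmetricspecialcancellation}, in the same spirit as the methods of \cite{V1}. The delicate point in executing this plan is to arrange the extractions so that the $L^2$-convergence of the good parts and the uniform smallness of the bad parts in $L^{1,\infty}$ fit together across \emph{all} $\lambda>0$; it is this compatibility across all amplitude scales that forces the atomic analysis rather than a purely functional-analytic reduction.
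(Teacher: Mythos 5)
Your proposal correctly identifies the main tools — the Calder\'on-Zygmund decomposition, $L^p$-compactness via Theorem \ref{Mainresult}, wavelet expansion of the atoms via Lemma \ref{lowoscillation}, the matrix estimate of Proposition \ref{symmetricspecialcancellation}, and the lagom dichotomy of Lemma \ref{lesizzy}. However, the overall framework has a genuine conceptual error and two resulting gaps.

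The central misreading concerns Example \ref{counterexample}. You assert that the operator-norm approximation $\|P_M^\perp T\|_{L^1\to L^{1,\infty}}\to 0$ fails and that ``one cannot simply invoke'' the projection argument, forcing a sequential-compactness route. But what the counterexample shows is only the \emph{failure of the converse}: a compact operator need not satisfy $P_M^\perp T\to 0$. The forward direction of Remark \ref{resizzy} — that a norm-limit of the finite-rank operators $P_M T$ is compact — is exactly what the paper uses, and it is available here. Indeed, the counterexample operator $Tf=\langle f,\psi_{[0,1]}\rangle\chi_{[0,1]}$ has $T^*(1)=\psi_{[0,1]}\neq 0$, so it does \emph{not} satisfy the hypotheses of Proposition \ref{weakL1}; under both cancellations $T(1)=T^*(1)=0$, one can and does prove $\|P_{2M}^\perp T\|_{L^1\to L^{1,\infty}}\to 0$, and the entire proof consists of establishing the bound $m(\{|P_{2M}^\perp Tf|>\lambda\})\lesssim\epsilon\lambda^{-1}\|f\|_1$ for $M$ large. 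By rejecting this route, you commit yourself to a more elaborate sequential argument that does not assemble cleanly.

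Two concrete gaps result. First, in the good part you perform the Calder\'on-Zygmund decomposition at dyadic levels $\alpha_m=2^m$ \emph{uncoupled} from $\lambda$ and then hope to promote $L^2$-Cauchyness of $Tg^m_{n_k}$ to $L^{1,\infty}$-Cauchyness via the uniform $L^1$ bound on $g^m_n$. This does not close: for small $\lambda$ the weak-$(1,1)$ estimate yields $\lambda\,m(\{|T(g^m_{n_k}-g^m_{n_l})|>\lambda\})\lesssim\|g^m_{n_k}-g^m_{n_l}\|_1$, which stays of order one and does not tend to zero. The paper avoids this by decomposing at level $\lambda/\epsilon$, which makes the Chebyshev estimate $\|P_{2M}^\perp T\tilde g\|_{L^p}^p\lesssim\epsilon^p\int|\tilde g|\,(\lambda/\epsilon)^{p-1}\,dx\le\epsilon\lambda^{p-1}\|f\|_1$ produce exactly the factor $\epsilon\lambda^{-1}\|f\|_1$; without this coupling the diagonal extraction over $m$ cannot be made uniform in $\lambda$. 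Second, your treatment of the bad part is vaguer than the argument requires: you claim the lagom contribution is controlled by the measure $|\bigcup Q|\lesssim 2^{-m}$, but in the paper the measure of $\tilde E$ controls only the trivial exceptional set; the lagom case (II) is instead handled by a geometric argument showing that if $K\in\mathcal D_{2M}^c$ yet some $J_i\in\mathcal D_M$, then necessarily $|e|\gtrsim M$ or $n\gtrsim M$, which forces the decaying factors $2^{-|e|\delta}n^{-(1+\delta)}$ to contribute $M2^{-M\delta}+M^{-\delta}<\epsilon$. Your proposal does not supply this mechanism.
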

\begin{proposition}\label{paraproducts1}
Given a function $b$ $\in \CMO(\mathbb R^{d})$, there exists a linear operator $T_b$ associated with a compact Calder\'on-Zygmund kernel 
such that $T_b$  and $T_b^{*}$ are compact from $L^{1}(\mathbb R^{d})$ into $L^{1,\infty }(\mathbb R^{d})$ and  satisfy
$
\langle T_b(1),g\rangle =\langle b,g\rangle
$
and
$
\langle T_b(f),1\rangle =0
$, 
for all $f,g\in {\mathcal S}(\mathbb R^{d})$.
\end{proposition}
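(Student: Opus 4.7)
The plan is to define $T_b$ as a wavelet paraproduct and to obtain compactness from $L^1$ to $L^{1,\infty}$ by approximating $b$ by finite wavelet sums.

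First I would fix a smooth, compactly supported orthonormal wavelet basis $(\psi_I^i)_{I \in {\cal D},\, i = 1,\ldots,2^d-1}$ of $L^2(\mathbb R^d)$, together with a companion bump $\phi_I(x) = |I|^{-1}\phi((x-c(I))/\ell(I))$ for some fixed $\phi \in C_c^\infty(\mathbb R^d)$ with $\int \phi = 1$. Define
$$
T_b f = \sum_{I \in {\cal D}} \sum_{i=1}^{2^d-1} \langle b,\psi_I^i\rangle \, \langle f,\phi_I\rangle \, \psi_I^i.
$$
The identity $\langle T_b f,1\rangle = 0$ is immediate from $\int \psi_I^i = 0$, while $\langle T_b(1),g\rangle = \langle b,g\rangle$ (for mean-zero $g \in {\cal S}(\mathbb R^d)$, in the distributional sense of Lemma \ref{definecmo}) follows from $\int \phi_I = 1$ and the wavelet reconstruction $b = \sum_{I,i}\langle b,\psi_I^i\rangle\psi_I^i$ paired against $g \in H^1(\mathbb R^d)$.

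Next I would verify that the distributional kernel $K(x,t) = \sum_{I,i}\langle b,\psi_I^i\rangle \psi_I^i(x)\phi_I(t)$ is a compact Calder\'on-Zygmund kernel. The standard size and smoothness estimates come from $|\langle b,\psi_I^i\rangle|\lesssim |I|^{1/2}\|b\|_{\BMO}$ together with the molecular nature of $\psi_I^i$ and $\phi_I$. To extract the admissible decay $F = L \cdot S \cdot D$ required by Definition \ref{prodCZ}, I would split the $I$-sum into three geometric regimes (cubes with $\ell(I)$ much smaller than $|x-t|_\infty$, cubes with $\ell(I)$ much larger, and cubes whose centre lies far from the origin) and invoke the Carleson characterization of $\CMO$ from Lemma \ref{lemacmochar}(iii): the vanishing of the wavelet-coefficient Carleson mass on non-lagom cubes supplies exactly the vanishing at the appropriate endpoints demanded by Definition \ref{admissible}, yielding the three factors $L(|x-t|_\infty)$, $S(|x-t|_\infty)$, $D(|x+t|_\infty)$.

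For the compactness of $T_b$ from $L^1$ to $L^{1,\infty}$, let $b^M = \sum_{I \in {\cal D}_M}\sum_i\langle b,\psi_I^i\rangle\psi_I^i$ be the lagom truncation of $b$; by Lemma \ref{lemacmochar}(iii), $\|b - b^M\|_{\BMO} \to 0$ as $M \to \infty$. The operator $T_{b^M}$ takes values in the finite-dimensional space ${\rm span}\{\psi_I^i : I \in {\cal D}_M\}$, and since each functional $f \mapsto \langle f,\phi_I\rangle$ is bounded on $L^1(\mathbb R^d)$ (because $\phi_I \in L^\infty$), $T_{b^M}$ is of finite rank, hence compact from $L^1$ to $L^{1,\infty}$ by Remark \ref{resizzy}. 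The classical endpoint bound $\|T_c\|_{L^1 \to L^{1,\infty}} \lesssim \|c\|_{\BMO}$ (which holds for $T_c$ as a standard $L^2$-bounded paraproduct subjected to the Calder\'on-Zygmund decomposition) applied to $c = b - b^M$ yields $\|T_b - T_{b^M}\|_{L^1 \to L^{1,\infty}} \to 0$; since an operator-norm limit of compact operators is compact even in the quasi-Banach target $L^{1,\infty}(\mathbb R^d)$, we conclude that $T_b$ is compact. The dual operator $T_b^* g = \sum_{I,i}\langle b,\psi_I^i\rangle\langle g,\psi_I^i\rangle \phi_I$ is handled identically: its lagom truncations are again finite rank and the dual paraproduct enjoys the same $\BMO$-to-endpoint boundedness.

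The main obstacle is the kernel verification in the second step. To realize all three admissible factors $L,S,D$ simultaneously, one must split the wavelet sum along three distinct geometric criteria and then translate the Carleson-vanishing of $b \in \CMO$ on non-lagom cubes into pointwise vanishing of the three kernel factors in \eqref{CCZ}. The analogous BMO argument yields only the classical (non-compact) CZ kernel estimates; the quantitative refinement in Lemma \ref{lemacmochar}(iii) is what upgrades these to compact CZ estimates, and this refinement must be threaded through each of the three sub-sums.
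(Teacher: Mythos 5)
Your proposal is correct and follows essentially the same strategy as the paper: define $T_b$ as a wavelet paraproduct, establish its compact Calder\'on--Zygmund kernel and cancellation properties (which the paper cites from \cite{V1} rather than reproving), and obtain compactness from $L^1$ into $L^{1,\infty}$ by writing $T_b = T_{P_M b} + T_{P_M^\perp b}$, where the first term is finite rank and the second has endpoint operator norm controlled by $\|P_M^\perp b\|_{\BMO}\to 0$. One small stylistic advantage of your version: by truncating $b$ directly you treat $T_b$ and $T_b^*$ uniformly, whereas the paper first proves the identity $P_M^\perp T_b = T_{P_M^\perp b}$ for $T_b$ and then has to observe (via Example~\ref{counterexample}) that the analogous identity fails for $T_b^*$, falling back on precisely your direct-truncation argument for the dual.
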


We now remind the reader how to deduce Theorem \ref{theoweakL1} from these propositions.  
The argument follows the well-known scheme 
provided in the proof of the classical $T(1)$ theorem. 
Namely, when $b_1=T(1)$, $b_2=T^{*}(1)$ are  functions in $\CMO(\mathbb R^{d})$, we use Proposition \ref{paraproducts1} to construct the paraproduct operators $T_{b_{i}}$. As proved in \cite{V1}, they have compact 
Calder\'on-Zymund kernels,  
are compact operators on $L^{2}(\mathbb R^{d})$ (and thus, they satisfy the weak compactness condition), 
and satisfy 
$T_{b_1}(1)=b_1$, $T_{b_2}(1)=b_2$ and $T_{b_1}^{*}(1)=T_{b_2}^{*}(1)=0$.
It now follows from Proposition \ref{weakL1} that the operator
$$
T-T_{b_1}-T_{b_2}^{*}
$$ 
is  compact from $L^{1}(\mathbb R^{d})$ into $L^{1,\infty}(\R^d)$. Finally, after proving that $T_{b_1}$ and $T_{b_2}^{*}$ are compact from $L^{1}(\mathbb R^{d})$ into $L^{1,\infty}(\R^d)$, we deduce that  
the same holds for the initial operator $T$.

\subsection{Proof of Proposition \ref{weakL1}}
Let $(\psi_{I}^{i})_{I\in {\mathcal D},i=1,\ldots, 2^{d}-1}$ be an orthogonal wavelet basis of $L^{2}(\mathbb R^{d})$ such that every function 
$\psi_{I}^{i}$ is adapted to a dyadic cube $I$ with constant $C>0$ and order $N$. We denote by  $P_{M}$ the lagom  projection of Definition \ref{lagom} associated with $(\psi_{I}^{i})_{I\in {\mathcal D},i=1,\ldots, 2^{d}-1}$. Since the index $i\in \{1,\ldots, 2^{d}-1\}$ and the dual wavelet play no significant role in the proof, in order to simplify notation, we will write the wavelet decomposition in $L^{2}(\mathbb R^{d})$ simply as 
$f=\sum_{I\in \mathcal D}\langle f,\psi_{I}\rangle \psi_{I}$.

By the classical theory, we know that $T$ extends to a bounded operator 
on $L^{p}(\mathbb R^{d})$, and 
from 
$L^{1}(\mathbb R^{d})$ into $L^{1,\infty}(\mathbb R^{d})$. Therefore, for every $f\in \mathcal S(\mathbb R^{d})$, $Tf$ and also $P_{M}^{\perp }Tf$
are meaningful as functions in the intersection of $L^p(\R^d)$ and $L^{1,\infty}(\mathbb R^{d})$.

Fix $1<p<\infty $. By Theorem \ref{Mainresult}, we already know that $T$ extends to a compact
operator on $L^{p}(\mathbb R^{d})$. Hence, 
for every $\epsilon >0$, there exists an $M_{0}\in \mathbb N$ such that, for all $M>M_{0}$ and   $f\in {\mathcal S}(\mathbb R^{d})$, we have
\begin{equation}\label{pcompactness}
\| P^{\perp}_{M}Tf\|_{L^{p}(\mathbb R^{d})}\lesssim \epsilon \| f\|_{L^{p}(\mathbb R^{d})},
\end{equation}
where the implicit constant depends only on $T$ and $p$.

According to Remark \ref{resizzy}, it suffices to prove that 
for any given $\epsilon >0$ and its corresponding $M_{1}\in \mathbb N$, we have for all 
$M>M_{1}$ with $M2^{-M\delta }+M^{-\delta }<\epsilon $, 
\begin{equation} \label{the target man}
m( \{ x \in \mathbb R^{d}: |P_{2M}^{\perp }Tf(x)|>\lambda \} ) \lesssim \frac{\epsilon}{\lambda} \| f\|_{L^{1}(\mathbb R^{d})}
\end{equation}
for all $f\in {\mathcal S}(\mathbb R^{d})$ and all $\lambda >0$. 
The implicit constant is allowed to depend on $\delta >0$, the parameter of the compact Calder\'on-Zygmund kernel, and the constant given by the wavelet basis, but is to be independent of $\epsilon$, $f$ and $\lambda $. 

To prove \eqref{the target man}, we perform a classical Calder\'on-Zygmund decomposition of $f$ at level 
$\epsilon^{-1}\lambda>0$. For this,  
we consider the collection $\cal I$ of maximal dyadic cubes $I$ with respect
to set inclusion such that
$$
\frac{1}{|I|}\int_{I}|f(x)|dx > \frac{\lambda}{\epsilon}.
$$
Let $E$ be the disjoint union of all $I\in {\cal I}$, which  satisfies 
$m(E) \leq \epsilon \lambda^{-1}\|f\|_{L^{1}(\mathbb R^{d})}$.
With this, we
define the usual Calder\'on-Zygmund decomposition $f=\tilde{g}+\tilde{b}$, where
$$
\tilde{g}=\sum_{I\in \cal I} m_{I}(f)\chi_I+f\chi_{E^{c}},
\hskip 30pt \tilde{b}=\sum_{I\in {\mathcal I}}f_{I}=\sum_{I\in \cal I} \big(f-m_I(f)\big)\chi_I,
$$
with $m_I(f)=|I|^{-1}\int_{I}f(x)dx$.

By standard arguments, it follows that $\norm{\tilde{g}}_{L^{\infty }(\mathbb R^{d})} \leq 2^{d}\lambda/\epsilon$, and moreover, that 
$\norm{\tilde{g}}_{L^{1}(\mathbb R^{d})} \leq \norm{f}_{L^{1}(\mathbb R^{d})}$.
From this, the inequality (\ref{pcompactness}), and the fact that $M>M_{1}>M_{0}$,  we get
\begin{align*}
\| P^{\perp}_{2M}T\tilde{g} \|_{L^{p}(\mathbb R^{d})}^{p}
&\lesssim \epsilon^{p} \| \tilde{g}\|_{L^{p}(\mathbb R^{d})}^{p}
\lesssim \epsilon^{p} \int_{\mathbb R^{d}} |\tilde{g}(x)|\frac{\lambda^{p-1}}{\epsilon^{p-1}}dx 
\\
&
\leq \epsilon \lambda^{p-1} 
\| f\|_{L^{1}(\mathbb R^{d})}.
\end{align*}
Whence,
\begin{align*}\label{function}
m\big( \{ x \in \mathbb R^{d} : |P^{\perp}_{2M}T\tilde{g}(x)| > \lambda/2\} \big)
&\lesssim \frac{1}{\lambda^{p}} \| P^{\perp}_{2M}T\tilde{g}\|_{L^{p}(\mathbb R^{d})}^{p}
\lesssim \frac{\epsilon }{\lambda} \| f\|_{L^{1}(\mathbb R^{d})}.
\end{align*}

Now we need to prove the same estimate for $\tilde{b}$.
To do so, we
define $\tilde{E}$ as
the union of all cubes $10I$ with $I\in {\cal I}$. 
Writing $\R^d = \tilde{E} \cup \tilde{E}^C$, yields
$$
m( \{ x \in \mathbb R^{d} : |P^{\perp}_{2M}T\tilde{b}(x)| > \lambda/2\} )
\lesssim m(\tilde{E})+\frac{1}{\lambda}\| P^{\perp}_{2M}T\tilde{b}\|_{L^1(\tilde{E}^C)}.
$$
Since $m(\tilde{E})\lesssim \epsilon \lambda^{-1} \| f\|_{L^{1}(\mathbb R^{d})}$, 
it remains to show that
$$
\| P^{\perp}_{2M}T\tilde{b} \|_{L^1(\tilde{E}^C)} \lesssim \epsilon \| f\|_{L^{1}(\mathbb R^{d})}.
$$
To prove this, it suffices to show that for each $I\in {\cal I}$, we have
\begin{equation}\label{eachI}
\| P^{\perp}_{2M}Tf_{I} \|_{L^1(\tilde{E}^C)} \lesssim \epsilon \| f_{I}\|_{L^{1}(\mathbb R^{d})}.
\end{equation}
Indeed, by sub-linearity, this would imply
\begin{align*}
\| P^{\perp}_{2M}T\tilde{b} \|_{L^1(\tilde{E}^C)} 
&\lesssim \epsilon \sum_{I\in {\mathcal I}}\| f_{I}\|_{L^{1}(\mathbb R^{d})} 
\leq \epsilon \| f\|_{L^{1}(\mathbb R^{d})},
\end{align*}
whence, 
$$
m(\{ x\in \mathbb R^{d} : |P^{\perp}_{2M}T\tilde{b}(x)| > \lambda/2\} ) \lesssim \frac{\epsilon}{\lambda} \| f\|_{L^{1}(\mathbb R^{d})},
$$
which would complete the proof.

The remainder of the proof therefore deals with  obtaining (\ref{eachI}). 
To this end, since 
$f_{I}=(f-m_I(f))\chi_I$ and apply 
Fatou's Lemma to obtain 
$$
\| P^{\perp}_{2M}Tf_{I} \|_{L^1(\tilde{E}^{C})}
\leq \liminf_{R\rightarrow \infty }
\| P^{\perp}_{2M}Tf_{I} \|_{L^1(\tilde{E}^{C}\cap [-R,R]^{d})}.
$$
To estimate the last quantity, it suffices, by duality,  to check that for all $g\in {\mathcal S}(\mathbb R^{d})$  in the unit ball of $L^{\infty }(\mathbb R^{d})$ with compact 
support in $K_{R}=\tilde{E}^{C}\cap [-R,R]^{d}$,
 we have  
\begin{equation}\label{compactnessbyduality}
|\langle P^{\perp}_{2M}Tf_{I}, g\rangle |\lesssim \epsilon \| f_{I}\|_{L^{1}(\mathbb R^{d})}.
\end{equation}
We now justify this claim. Observe that since $f_I \in L^{2}(\mathbb R^{d})$, it follows by the continuity of $T$ and  $P^{\perp}_{2M}$ on $L^{2}(\mathbb R^{d})$
that we also have  $P^{\perp}_{2M}Tf_{I}\in L^{2}(\mathbb R^{d})$. 
Hence, the function
$h=\sign(P^{\perp}_{2M}Tf_{I})\chi_{K_{R}}$  
can be approximated in the norm of $L^{2}(K_{R})$ by a function $g\in {\mathcal S}(\mathbb R^{d})$ with compact support in $K_R$ such that $\| g\|_{L^{\infty }(\mathbb R^{d})}\leq \| h\|_{L^{\infty }(\mathbb R^{d})}=1$. 
 With this, we have 
\begin{align*}
\| P^{\perp}_{2M}Tf_{I} \|_{L^1(K_{R})}
&\leq |\langle P^{\perp}_{2M}Tf_{I},g\rangle |
+ |\langle P^{\perp}_{2M}Tf_{I},h-g\rangle |,
\end{align*}
where the last term can be bounded by  a constant times
$$
\|P^{\perp}_{2M}Tf_{I}\|_{L^{2}(\mathbb R^{d})}
\|h-g\|_{L^{2}(K_{R})}
\lesssim \|f_{I}\|_{L^{2}(\mathbb R^{d})}\epsilon\frac{\|f_{I}\|_{L^{1}(\mathbb R^{d})}}{1+\|f_{I}\|_{L^{2}(\mathbb R^{d})}}.
$$
This ends the desired  justification. 

We work now to obtain (\ref{compactnessbyduality}). 
We start by justifying the equality 
\begin{equation}\label{waveletsrepre}
\langle P^{\perp}_{2M}Tf_{I}, g\rangle 
=\sum_{J\in {\mathcal D}}\sum_{K\in {\mathcal D}_{2M}^{c}}\langle f_{I}, \psi_{J}\rangle 
\langle g,\psi_{K}\rangle \langle T\psi_{J}, \psi_{K}\rangle,
\end{equation}
for functions $g$ as described above.
Since
$g\in \mathcal S(\mathbb R^{d})$,
we have that $P^{\perp}_{2M}g=g-\sum_{K\in {\mathcal D}_{2M}}\langle g,\psi_{K}\rangle \psi_{K}$ is a well defined bounded smooth function. Therefore, we can give sense to
$
\langle P^{\perp}_{2M}Tf_{I}, g\rangle 
=\langle Tf_{I}, P^{\perp}_{2M}g\rangle.
$
%
Moreover, since 
$f_{I}\in L^{2}(\mathbb R^{d})$, we can write  
$f_{I}=\sum_{J\in {\mathcal D}}\langle f_{I},\psi_{J}\rangle \psi_{J}$ with convergence in 
$L^{2}(\mathbb R^{d})$.  
Also $g\in L^{2}(\mathbb R^{d})$ and so, according to Definition \ref{lagom}, 
we have 
$P^{\perp}_{2M}g=\sum_{K\in {\mathcal D}_{2M}^{c}}\langle g,\psi_{K}\rangle \psi_{K}$ with convergence also in $L^{2}(\mathbb R^{d})$. 
We now write for all $M',M''>2M$,
\begin{multline*}
\big|\langle Tf_{I}, P^{\perp}_{2M}g\rangle -\sum_{J\in {\mathcal D_{M'}}}
\sum_{K\in {\mathcal D}_{M''}\backslash {\mathcal D}_{2M}}
\langle f_{I}, \psi_{J}\rangle 
\langle g,\psi_{K}\rangle \langle T\psi_{J}, \psi_{K}\rangle \big|
\\ \leq  \| Tf_{I}\|_{L^{2}(\mathbb R^{d})}
\Big\| P^{\perp}_{2M}g-\sum_{K\in {\mathcal D}_{M''}\backslash {\mathcal D}_{2M}}
\langle g,\psi_{K}\rangle\psi_{K}\Big\|_{L^{2}(\mathbb R^{d})}
\\
+
\| g\|_{L^{2}(\mathbb R^{d})} \Big\| T\big(f_{I}-\sum_{J\in {\mathcal D}_{M'}}\langle f_{I}, \psi_{J}\rangle \psi_{J}\big)\Big\|_{L^{2}(\mathbb R^{d})}
\end{multline*}
By all the stated relationships and the continuity of $T$ on $L^{2}(\mathbb R^{d})$,
both terms in previous inequality tend to zero when $M', M''$ tend to infinity. 
This justifies the equality (\ref{waveletsrepre}).

Then, it follows from the triangle inequality that 
\begin{align*}
\nonumber
|\langle P^{\perp}_{2M}Tf_{I}, g\rangle |
&=|\langle Tf_{I}, P^{\perp}_{2M}g\rangle |\\
&\leq \sum_{J\in {\mathcal D}}\sum_{K\in {\mathcal D}_{2M}^{c}}|\langle f_{I}, \psi_{J}\rangle |
|\langle g,\psi_{K}\rangle | | \langle T\psi_{J}, \psi_{K}\rangle |.
\end{align*} 
Now, for any given $\epsilon>0$ and $M_{T,\epsilon}\in \mathbb N$, we have
by Proposition \ref{symmetricspecialcancellation}, 
\begin{equation} \label{whatsup}
|\langle T\psi_J,\psi_K\rangle |
\lesssim \frac{\ecc(J,K)^{\frac{1}{2}+\frac{\delta}{d}}}{\rdist(J,K)^{d+\delta } }
\Big( F(J_{1},\ldots ,J_{6}; M_{T,\epsilon })+\epsilon \Big),
\end{equation}
where we wrote the parameter $\delta'$ simply as $\delta $, 
$J_{1}=J$, $J_{2}=K$, $J_{3}=\langle J,K\rangle$, $J_{4}=\lambda_{1}\tilde{K}_{max}$, 
$J_{5}=\lambda_{2}\tilde{K}_{max}$ and  $J_{6}=\lambda_{2}K_{min}$, with parameters $\lambda_{1},\lambda_{2}\geq 1$ explicitly 
stated in the proposition.

To further simplify notation, we write the last factor as $F(J_{i})+\epsilon $. 
Applying \eqref{whatsup}, we get
\begin{equation*}
|\langle P^{\perp}_{2M}Tf_{I}, g\rangle |
\lesssim 
\sum_{J\in {\mathcal D}}\sum_{K\in {\mathcal D}_{2M}^{c}}
\hspace{-.2cm}|\langle f_{I}, \psi_{J}\rangle | |\langle g,\psi_{K}\rangle | 
\frac{\ecc(J,K)^{\frac{1}{2}+\frac{\delta}{d}}}{\rdist(J,K)^{d+\delta }} \big(F(J_{i})+\epsilon\big).
\end{equation*}

Now, we parametrise both sums according to the eccentricities and relative distances: first of $J$ with respect to the fixed cube $I$ and, later, of $K$ with respect each cube $J$. 
To this end, for every $k\in \mathbb Z$ and $m\in \mathbb N$, $m\geq 1$, we define the family
$$
I_{k,m}=\{ J\in {\mathcal D}:\ell(I)=2^{k}\ell(J), m\leq \rdist(I,J)< m+1 \}.
$$
We note that
the cardinality of $I_{k,m}$ is $2^{\max(k,0)d}2d(2m)^{d-1}$.

In the same way, for every $e\in \mathbb Z$ and $n\in \mathbb N$, $n\geq 1$, and every given cube $J\in I_{k,m}$,
we define the family
$$
J_{e,n}=\{ K\in {\mathcal D}:\ell(J)=2^{e}\ell(K), n\leq \rdist(J,K)< n+1 \}
$$
whose the cardinality is  $2^{\max(e,0)d}2d (2n)^{d-1}$.
With all this, we have
\begin{align*} 
|\langle P^{\perp}_{2M}Tf_{I}, g\rangle 
&|\lesssim 
\sum_{\tiny \begin{array}{l}k\in \mathbb Z\\ m\in \mathbb N\end{array}}
\sum_{\tiny \begin{array}{l}e\in \mathbb Z\\ n\in \mathbb N\end{array}}
\sum_{J\in I_{k,m}} 
\sum_{K\in J_{e,n}\cap {\mathcal D}_{2M}^{c}}
|\langle f_{I}, \psi_{J}\rangle | |\langle g,\psi_{K}\rangle | 
\\
&  \qquad \qquad  \qquad \qquad  \cdot 2^{-|e|d(\frac{1}{2}+\frac{\delta}{d})}n^{-(d+\delta )} \, \, (F(J_{i})+\epsilon)
\\ 
& \lesssim 
\sum_{\tiny \begin{array}{l}e\in \mathbb Z\\ n\in \mathbb N\end{array}}
2^{-|e|(\frac{d}{2}+\delta)} n^{-(d +\delta)}
2^{\max(e,0)d}n^{d-1} 
\\
&\hskip 10pt \cdot 
\sum_{\tiny \begin{array}{l}k\in \mathbb Z\\ m\in \mathbb N\end{array}}
\sum_{J\in I_{k,m}} 
|\langle f_{I}, \psi_{J}\rangle | 
\sup_{K\in J_{e,n}\cap {\mathcal D}_{2M}^{c}}
|\langle g,\psi_{K}\rangle | (F(J_{i})+\epsilon).  
\end{align*}
A crude estimate yields
\begin{align}\label{usesupg}
\nonumber
|\langle g,\psi_{K}\rangle |&\leq \| g\|_{L^{\infty }(\mathbb R^{d})}\int_{\tilde{E}^{C}}|K|^{-\frac{1}{2}}
\Big(1+\frac{|x-c(K)|_{\infty }}{\ell(K)}\Big)^{-N}dx\\
&\lesssim
|K|^{1/2}\Big(1+\frac{\dist(\tilde{E}^{C},c(K))}{\ell(K)}\Big)^{-N}
\nonumber
\\
&= |J|^{1/2}2^{-ed/2}w(\tilde{E}^{C},K)^{-N},
\end{align}
where the expression $w(\tilde{E}^{C},K)$ is defined by the last equality.
Using this and $2^{-e/2}2^{\max(e,0)}=2^{|e|/2}$, the above inequality becomes:
\begin{multline} \label{themainguy}
	|\langle P^{\perp}_{2M}Tf_{I}, g\rangle |
 	\lesssim
	\sum_{e\in \mathbb Z} \sum_{n\in \mathbb N }2^{- |e|\delta }n^{-(1+\delta)} 
	\\ \cdot \sum_{k\in \mathbb Z}\sum_{m\in \mathbb N }\sum_{J\in I_{k,m}} 
|\langle f_{I}, \psi_{J}\rangle |  
|J|^{1/2} 
\hspace{-3mm}
 \sup_{\substack{K\in J_{e,n}\cap {\mathcal D}_{2M}^{c}}} 
 w(\tilde{E}^{C},K)^{-N}(F(J_{i})+\epsilon).
\end{multline}
To keep the notation simple, we take the supremum over the empty set to be zero (recall that the support of $g$ is contained in $\tilde{E}^C$). Also, observe that, even though it is hidden by our choice of notation, $F(J_i)$ depends
on both $J$ and $K$ and thus, depends on $k$, $m$, $e$ and $n$.

In order to estimate \eqref{themainguy}, we need to control the terms of the double inner sum. 
We split the argument into two cases, depending on the size of $F(J_i)$:

\begin{itemize}
	\item[$(I)$] $J_{i}\notin {\cal D}_{M}$ for all $i=1,\ldots ,6$.  
	\item[$(II)$] $J_i \in {\cal D}_{M}$ for some $i=1,\ldots ,6$.  
\end{itemize}

The point here is that in case $(I)$ we know, according to Lemma \ref{lesizzy},  that $F(J_i) < \epsilon$ and thus it suffices  to merely bound the sum by some constant. 
On the other hand, in case $(II)$ we only know that $F(J_i)$ is bounded and so, we need to use the size and location of the cubes $J$ and $K$ to deduce an estimate that depends on $\epsilon$. 

\vskip10pt
{\bf Proof of (I).} As already noted, in this first case we have 
$F(J_{i})
<\epsilon $.
We divide the study in two cases: $\ell(I)\leq \ell(J)$ and $\ell(I)>\ell(J)$, which correspond to the cases $k\leq 0$ and $k>0$ respectively.

The first case follows directly from Lemma \ref{lowoscillation}. 
Indeed, since $k\leq 0$ and $f_{I}$ is supported on $I$ with zero mean, by \eqref{L1control2} we have
\begin{align}\label{L1}
\nonumber
|\langle f_{I}, \psi_{J}\rangle | |J|^{\frac{1}{2}}
&\lesssim \| f_{I}\|_{L^{1}(\mathbb R^{d})}\frac{\ell(I)}{\ell(J)} \bigg(1+\frac{|c(I)-c(J)|}{\ell(J)}\bigg)^{-N}
\\
&\lesssim \| f_{I}\|_{L^{1}(\mathbb R^{d})}2^{k}m^{-N}.
\end{align}
Moreover, the cardinality of $J\in I_{k,m}$ is comparable to $m^{d-1}$ and so,
in light of \eqref{L1} and the inequality $F(J_i)<\epsilon$, the inequality  \eqref{themainguy}  becomes
\begin{align*} 
	|\langle P^{\perp}_{2M}Tf_{I}, g\rangle |
 	&\lesssim
	\epsilon \sum_{e\in \mathbb Z} \sum_{n\in \mathbb N }2^{- |e|\delta }n^{-(1+\delta)} 
	\sum_{k\leq 0} \sum_{m\in \mathbb N }2^k  \norm{f_I}_{L^1(\R^d)}m^{d-1-N} \\
	&\lesssim
	\epsilon  \norm{f_I}_{L^1(\R^d)},
\end{align*}
for $N>d$, with the implicit constant depending exponentially on $d$.

In the second case, however, we need to be more careful. Now we have $\ell(J)< \ell(I)$, or equivalently  $k>0$, 
and we further divide in two more cases: $\ell(I)< \ell(K)$ and  $\ell(K)\leq \ell(I)$. In the first case, 
we use \eqref{L1control1} in Lemma \ref{lowoscillation}, and so,
\begin{align*}
|\langle f_{I}, \psi_{J}\rangle | |J|^{\frac{1}{2}}
&\lesssim \| f_{I}\|_{L^{1}(\mathbb R^{d})}\bigg(1+\frac{|c(I)-c(J)|}{\ell(I)}\bigg)^{-N}
\\
&\lesssim \| f_{I}\|_{L^{1}(\mathbb R^{d})}m^{-N}.
\end{align*}
Moreover, we have that 
$\ell(I)<\ell(K)=2^{-(e+k)}\ell(I)$ which implies 
$0<k\leq -e$. Note that $e\leq 0$ in this situation since $\ell(J)\leq \ell(I)\leq \ell(K)$. 
Then, since  $F(J_i)<\epsilon$, 
 the bound for the corresponding terms of
(\ref{themainguy}) becomes 
\begin{equation*}
	\begin{aligned}
        \epsilon \sum_{e\in \mathbb Z} &\sum_{n\in \mathbb N } 2^{- |e|\delta }n^{-(1+\delta)} 
	\sum_{k=1}^{ |e|} \sum_{m\in \mathbb N }
|\langle f_{I}, \psi_{J}\rangle | 
|J|^{1/2} \\
&\lesssim
	\epsilon \sum_{e\in \mathbb Z} \sum_{n\in \mathbb N }2^{- |e|\delta }n^{-(1+\delta)} |e|\sum_{m\in \mathbb N } \| f_{I}\|_{L^{1}(\mathbb R^{d})}m^{-N} \\
	&\lesssim
	\epsilon \| f_{I}\|_{L^{1}(\mathbb R^{d})}.
	\end{aligned}
\end{equation*}

We now assume that $\ell(K)\leq \ell(I)$. By the definition of bump functions adapted to a cube, we have 
$$
|\langle f_{I}, \psi_{J}\rangle | |J|^{\frac{1}{2}} 
\leq C\int_{I} |f_{I}(x)|
\bigg(1+\frac{|x-c(J)|_{\infty }}{\ell(J)}\bigg)^{-N}dx.
$$
Then, for every fixed eccentricity parameter $0< k$ and every fixed $x\in I$,
we proceed to parametrise the cubes $J\in I_{k,m}$ associated with a fixed value of
$\ell(J)^{-1}|x-c(J)|_{\infty }$.
Since
$$
m\leq \rdist(I,J)\leq 1+\frac{|c(I)-c(J)|_{\infty }}{\ell(I)}
$$
and
$$
m+1> \rdist(I,J)\geq \frac{1}{2}\Big(1+\frac{|c(I)-c(J)|_{\infty }}{\ell(I)}\Big),
$$
we get $(m-1)\ell(I)\leq |c(I)-c(J)|_{\infty }\leq (2m+1)\ell(I)$.
This way, 
$$
|x-c(J)|_{\infty }\geq |c(I)-c(J)|_{\infty }-|x-c(I)|_{\infty }
\geq (m-\frac{3}{2})\ell(I)\geq \frac{m-1}{2}2^{k}\ell(J)
$$
and
$$
|x-c(J)|_{\infty }\leq (2m+\frac{3}{2})\ell(I)\leq 2(m+1)2^{k}\ell(J).
$$
Moreover, for every fixed integer $(m-1)2^{k-1}\leq r\leq 2^{k+1}(m+1)$, there are at most 
$d2^{d}(r+1)^{d-1}$
cubes $J\in I_{k,m}$ with  
$r\leq \ell(J)^{-1}|x-c(J)|_{\infty }<r+1$. 
With all this, we have
\begin{align}\label{inside0}
\nonumber
\sum_{J\in I_{k,m}} 
|\langle f_{I}, \psi_{J}\rangle | |J|^{\frac{1}{2}} 
&\lesssim \int |f_{I}(x)|\sum_{r=(m-1)2^{k-1}}^{(m+1)2^{k+1}}(1+r)^{d-1-N}dx
\\
&\lesssim \| f_{I}\|_{L^{1}(\mathbb R^{d})}(1+(m-1)2^{k-1})^{d-N}.
\end{align}

For this reason, we again divide the argument into two cases: $J\cap 3I= \emptyset$ and $J\cap \, 3I\neq \emptyset$. In the former case, we have 
$m>1$, and so,  
with the estimate \eqref{inside0} and the inequality  $F(J_i)<\epsilon$,
the bound for the corresponding terms of
(\ref{themainguy}) becomes
\begin{equation*} 
	\begin{aligned} 
	\epsilon \sum_{e\in \mathbb Z} & \sum_{n\in \mathbb N }  2^{- |e|\delta }n^{-(1+\delta)} 
	\sum_{k\geq 1}\sum_{m\geq 2 }\sum_{J\in I_{k,m}} 
|\langle f_{I}, \psi_{J}\rangle | 
|J|^{1/2} \\
&\lesssim
	\epsilon  \| f_{I}\|_{L^{1}(\mathbb R^{d})} \sum_{e\in \mathbb Z} \sum_{n\in \mathbb N }2^{- |e|\delta }n^{-(1+\delta)} 
	\sum_{k\geq 1} \sum_{m\geq 2}m^{d-N}2^{k(d-N)}\\
	&\lesssim
	\epsilon \| f_{I}\|_{L^{1}(\mathbb R^{d})}
	\end{aligned}
\end{equation*}
as long as $N>d+1$.

In the case 
$J\cap \, 3I\neq \emptyset$, we have $m=1$ and $|c(I)-c(J)|_{\infty }\leq 5\ell(I)/2$. 
To continue the analysis, we need to split into two further subcases: $K\cap 7I= \emptyset$ and 
$K\cap 7I\neq  \emptyset$.

In the former case,  we have that 
$|c(K)-c(I)|_{\infty }\geq 7\ell(I)/2$
and so, 
\begin{align*}
n+1 	>\rdist(J,K) 
	&\geq \frac{1}{2} \bigg(1+\frac{|c(J)-c(K)|_{\infty }}{\max\big(\ell(J),\ell(K)\big)}\bigg) \\
	&\geq \frac{1}{2} \bigg(1+ \frac{|c(K)-c(I)|_{\infty }-|c(I)-c(J)|_{\infty }}{\max\big(\ell(J),\ell(K)\big)} \bigg) \\
	&\geq 
	\frac{1}{2} \bigg(1+\frac{\ell(I)}{\max\big(\ell(J),\ell(K)\big)}\bigg).
\end{align*}
Since 
$\ell(J)=2^{-k}\ell(I)$ and $\ell(K)=2^{-e}\ell(J)=2^{-(e+k)}\ell(I)$, this yields
$$
n+1>\frac{1}{2}\big(1+2^{k}\min(1,2^{e})\big),
$$
whence $$1\leq 2^{k}\leq \frac{2n+1}{\min(1,2^{e})}\leq 3n(1+2^{-e}).$$

With the estimate \eqref{inside0} and the inequality  $F(J_i)<\epsilon$,
the bound for the corresponding terms of
(\ref{themainguy}) becomes
\begin{equation*} 
	\begin{aligned} 
	\epsilon \sum_{e\in \mathbb Z} & \sum_{n\in \mathbb N }  2^{- |e|\delta }n^{-(1+\delta)} 
	\hspace{-3mm} 
	\sum_{k=1   }^{ \log(3n(1+2^{-e}))}\sum_{J\in I_{k,1}} 
|\langle f_{I}, \psi_{J}\rangle | 
|J|^{1/2} 
\\
&\lesssim
	\epsilon  \| f_{I}\|_{L^{1}(\mathbb R^{d})} \sum_{e\in \mathbb Z} \sum_{n\in \mathbb N }2^{- |e|\delta }n^{-(1+\delta)} 
	\log\big(3n(1+2^{-e})\big) \\
	&\lesssim
	\epsilon \| f_{I}\|_{L^{1}(\mathbb R^{d})}
	\end{aligned}
\end{equation*}
as long as $N>d$. 

On the other hand, when $K\cap 7I\neq \emptyset $ and $\ell(K)\leq \ell(I)$, we have $\dist(\tilde{E}^{C},c(K))\geq \ell(I)$. Moreover, 
$\ell(I)=2^{k+e}\ell(K)$ with $k+e\geq 0$. Then, 
$$
w(\tilde{E}^{C},K)
=1+\frac{\dist(\tilde{E}^{C},c(K))}{\ell(K)}\geq 1+2^{k+e}.
$$
With this, the estimate \eqref{inside0}, and the inequality  $F(J_i)<\epsilon$,
the bound for the corresponding terms of
(\ref{themainguy}) becomes
\begin{equation*} 
	\begin{aligned} 
	\epsilon \sum_{e\in \mathbb Z} & \sum_{n\in \mathbb N }  2^{- |e|\delta }n^{-(1+\delta)} 
	\sum_{\tiny\begin{array}{c}k\geq 1\\ k+e\geq 0\end{array}
	}\sum_{J\in I_{k,1}} 
|\langle f_{I}, \psi_{J}\rangle | |J|^{1/2} w(\tilde{E}^{C},K)^{-N}\\
&\lesssim
	\epsilon  \| f_{I}\|_{L^{1}(\mathbb R^{d})} \sum_{e\in \mathbb Z} \sum_{n\in \mathbb N }2^{- |e|\delta }n^{-(1+\delta)} 
	\sum_{\tiny\begin{array}{c}k\geq 1\\ k+e\geq 0\end{array}} 2^{-(k+e)N}\\
	&\lesssim
	\epsilon \| f_{I}\|_{L^{1}(\mathbb R^{d})}.
	\end{aligned}
\end{equation*}

Combining all the obtained estimates, 
we get the desired bound for \eqref{themainguy} under the assumption of case $(I)$.

\vskip10pt
{\bf Proof of (II).} As previously stated, in this case we use the size and location of the cubes $J$ and $K$ to deduce an estimate that depends on $\epsilon$. This leads to the following sub-cases:
\vskip10pt
\noindent
\hspace{0mm}
\begin{minipage}{7cm}
\begin{enumerate}
\item[$(II_1)$] $J_1 = J\in {\cal D}_{M}$
\item[$(II_2)$] $J_2= K\in {\cal D}_{M}$
\item[$(II_3)$] $J_3= \langle J\cup K\rangle \in {\cal D}_{M}$
\end{enumerate}
\end{minipage}
\hspace{-1.8cm}
\begin{minipage}{8cm}
\begin{enumerate}
\item[$(II_4)$] $J\notin {\cal D}_{M}$ but $J_4= \lambda_{1}\tilde{K}_{max}\in {\cal D}_{M}$
\item[$(II_5)$] $J\notin {\cal D}_{M}$ but $J_5 =\lambda_{2}\tilde{K}_{max}\in {\cal D}_{M}$
\item[$(II_6)$] $J\notin {\cal D}_{M}$ but $J_6 = \lambda_{2}K_{min} \in {\cal D}_{M}$
\end{enumerate}
\end{minipage}
\vskip10pt
We can use the fact that $K\in J_{e,n}\cap {\mathcal D}_{2M}^{c}$ to immediately rule out the case $(II_2)$. 
We note that the property $K\notin {\cal D}_{2M}$ 
plays a crucial role in the remaining cases. 
We prove only the case $(II_1)$ since, as explained in more detail in \cite{V1}, all other cases can be dealt with by a similar reasoning.

\vskip10pt
(II$_1$):  We recall that the cubes $J$ and $K$ in the sum (\ref{themainguy}) satisfy $\ell(J)=2^{e}\ell(K)$ and 
$n\leq \rdist(J,K)<n+1$. 

By assumption, we have
$J\in {\cal D}_{M}$. That is,   
$2^{-M}\leq \ell(J)\leq 2^{M}$ and $\rdist(J,\mathbb B_{2^{M}})\leq M$.
Also, since $F$ is bounded, we have   $F(J_{i})+\epsilon \lesssim 1$. 

Since $K\in {\mathcal D}_{2M}^{c}$, 
we separate the study into  three  cases: 

\hspace{2mm}
\begin{minipage}{1\textwidth}

\hspace{0mm}
\begin{itemize}
	\item[$(II_{1.1})$] $\ell(K)>2^{2M}$, 
	\item[$(II_{1.2})$] $\ell(K)<2^{-2M}$
	\item[$(II_{1.3})$] $2^{-2M}\leq \ell(K)\leq 2^{2M}$ with $\rdist(K,\mathbb B_{2^{2M}})>2M$. 
\end{itemize}
\hspace{0mm}
\end{minipage}

$(II_{1.1}):$ 
The inequalities $\ell(K)>2^{2M}$ and 
$2^{e}\ell(K)=\ell(J)\leq 2^{M}$ imply $2^{e}\leq 2^{M}\ell(K)^{-1}\leq 2^{-M}$ and so, 
$e\leq -M$. 

Using this and repeating the arguments from $(I)$, 
the inequality \eqref{themainguy} becomes
\begin{align*}
	|\langle P^{\perp}_{2M}Tf_{I}, g\rangle |
&\lesssim
	   \| f_{I}\|_{L^{1}(\mathbb R^{d})} \sum_{e \leq - M} \sum_{n\in \mathbb N }2^{- |e|\delta }n^{-(1+\delta)} 
	(|e| + \log n) \\
	&
	\lesssim M2^{-M\delta }\| f_{I}\|_{L^{1}(\mathbb R^{d})}<\epsilon \| f_{I}\|_{L^{1}(\mathbb R^{d})},
\end{align*}
where the last inequality holds by the choice of $M$.

$(II_{1.2}):$ The case $\ell(K)<2^{-2M}$ is totally symmetrical with respect to the previous one, and amounts to changing $e\leq -M$ by $e\geq M$.

$(II_{1.3}):$ When  $2^{-2M}\leq \ell(K)\leq 2^{2M}$  and $\rdist(K,\mathbb B_{2^{2M}})\geq 2M$, we have 
by Remark \ref{remark in section 2} that $|c(K)|_{\infty }\geq (2M-1)2^{2M}$.  
Moreover, since $J\in {\mathcal D_{M}}$, by the same remark  we have 
$|c(J)|_{\infty }\leq (M-1)2^{M}$.
Then,
\begin{align*}
|c(J)-c(K)|_{\infty } &\geq |c(K)|_{\infty }-|c(J)|_{\infty } 
\geq M2^{2M}.
\end{align*}
Furthermore, $\max(\ell(J),\ell(K))\leq 2^{2M}$ and so,
\begin{align*}
n+1 &> \rdist(J,K) 
\geq \frac{|c(J)-c(K)|_{\infty }}{\max(\ell(J),\ell(K))}
\geq 
M.
\end{align*}
Using this in combination with   the arguments from $(I)$, 
the inequality \eqref{themainguy} now becomes
\begin{align*}
	|\langle P^{\perp}_{2M}Tf_{I}, g\rangle |
&\lesssim
	   \| f_{I}\|_{L^{1}(\mathbb R^{d})} \sum_{e \in \mathbb{Z}} \sum_{n \geq  M-1}2^{- |e|\delta }n^{-(1+\delta)} 
	(|e| + \log n) \\
	&
	\lesssim M^{-\delta }\| f_{I}\|_{L^{1}(\mathbb R^{d})}<\epsilon \| f_{I}\|_{L^{1}(\mathbb R^{d})},
\end{align*}
 again by the choice of $M$.

\qed

\subsection{Proof of Proposition \ref{paraproducts1}}
Let $(\psi_{I}^{i})_{I\in {\mathcal D},i=1,\ldots, 2^{d}-1}$ be an orthogonal wavelet basis of $L^{2}(\mathbb R^{d})$ such that every function 
$\psi_{I}^{i}$ is adapted to a dyadic cube $I$ with constant $C>0$ and order $N$.  As in the proof of Proposition \ref{weakL1}, we suppress the dependence on the  index $i\in \{1,\ldots, 2^{d}-1\}$.

We denote by $\varphi \in{\mathcal S}(\mathbb R^{d})$ a positive bump function adapted to $[-1/2,1/2]^{d}$ with order $N$ and constant $C>0$ such that 
 $\int_{\mathbb R^{d}}\varphi(x)dx=1$. 
In particular, we have that 
$0\leq \varphi (x)\leq C(1+|x|_{\infty })^{-N}$ and $|\partial_{i}\varphi(x)|\leq C(1+|x|_{\infty })^{-N}$ for all $i=1,\ldots ,d$.
Let $(\varphi_{I})_{I\in {\mathcal D}}$  be
the family of bump functions defined by $\varphi_I(x)=\frac{1}{|I|}\varphi\Big(\frac{x-c(I)}{\ell(I)}\Big)$. 

Given a function $b\in \CMO(\mathbb R^{d})$, we define the linear operator $T_b$ by 
$$
\langle T_b f,g\rangle =\sum_{J\in {\mathcal D}}  \langle b, \psi_{J}\rangle \langle f, \varphi_{J}\rangle \langle g,\psi_{J}\rangle,
$$
for all $f,g\in {\mathcal S}(\mathbb R^{d})$.
It was shown in \cite{V1} that 
$T_{b}$ and $T_{b}^{*}$ are associated with a compact Calder\'on-Zygmund kernel, are compact on $L^{p}(\mathbb R^{d})$ for every $1<p<\infty$, and they satisfy
$
\langle T_b(1),g\rangle =\langle b,g\rangle
$
and
$
\langle T_b(f),1\rangle =0
$. 

Now, we prove that $T_{b}$, $T_b^\ast$ are compact from $L^{1}(\mathbb R^{d})$ into $L^{1,\infty }(\mathbb R^{d})$. 
%
To prove compactness of the former operator, we show first the equality $P_{M}^{\perp}T_b=T_{P_{M}^{\perp}b}$. 
Let $f,g\in {\mathcal D}(\mathbb R^{d})$. Since 
$P_{M}^{\perp }g=\sum_{J\in {\mathcal D}_{M}^{c}}  \langle g, \psi_{J}\rangle \psi_{J}$, 
\begin{align*}\label{easyparaproduct}
\nonumber
\langle P_{M}^{\perp}T_bf,g\rangle &=\langle T_bf_{I},P_{M}^{\perp}g\rangle 
=\sum_{J\in {\mathcal D}_{M}^{c}}  \langle b, \psi_{J}\rangle \langle f, \varphi_{J}\rangle \langle g,\psi_{J}\rangle
\\
&=\sum_{J\in {\mathcal D}}   \langle P_{M}^{\perp }b, \psi_{J}\rangle \langle f, \varphi_{J}\rangle \langle g,\psi_{J}\rangle
=\langle T_{P_{M}^{\perp }b}(f),g\rangle ,
\end{align*}
where the second last equality holds because $b\in \CMO(\mathbb R^{d})$ and so, we also have
$
P_{M}^{\perp }b=\sum_{J\in {\mathcal D}_{M}^{c}}  \langle b, \psi_{J}\rangle \psi_{J}.
$

Moreover, $b\in \CMO(\mathbb R^{d})$ implies that for any given $\epsilon >0$, there exists $M_{0}\in \mathbb N$ such that 
$\| P_{M}^{\perp }b\|_{\BMO(\mathbb R^{d})}<\epsilon $ for all $M>M_{0}$.
Also,
since $T_{P_{M}^{\perp }b}$ is a Calder\'on-Zygmund operator, we know by the classical theory that it is bounded from $L^{1}(\mathbb R^{d})$ into $L^{1,\infty }(\mathbb R^{d})$ with constant bounded by 
$\| P_{M}^{\perp }b\|_{\BMO(\mathbb R^{d})}$. 
With all this we can write
\begin{align*}
m( \{ x \in \mathbb R^{d} : |P^{\perp}_{M}T_bf(x)| > \lambda\} )
&=m( \{ x \in \mathbb R^{d} : |T_{P^{\perp}_{M}b}f(x)| > \lambda\} )
\\
&\lesssim \frac{1}{\lambda} \| P_{M}^{\perp }b\|_{\BMO(\mathbb R^{d})}
\| f\|_{L^{1}(\mathbb R^{d})}
\\
&\lesssim  \frac{\epsilon}{\lambda} \| f\|_{L^{1}(\mathbb R^{d})},
\end{align*}
which is the result we seek. 

Finally, we turn to the operator
$$T_b^\ast f(x) = \sum_{J \in \mathcal{D}}\inner{b}{\psi_{J}} \inner{f}{\psi_{J}} \varphi_J(x).$$ 
Our previous reasoning does not apply because, in general, $P^{\perp}_{M}T_b^\ast$ does not converge to zero. Namely, for $d=1$,  
$b=\psi_{[0,1]}$ and $\varphi=\chi_{[0,1]}$, we have that $T_b^\ast f=\inner{f}{\psi_{[0,1]}} \chi_{[0,1]}$, which is the operator we studied in example \ref{counterexample}. As we saw, $T_b^\ast $ is compact at the endpoint but $P_{M}T_b^\ast$ does not converge to $T_b^\ast $ in $L^{1,\infty }(\mathbb R)$.

However, by linearly, we still have $T_b^\ast = T_{P_M b}^\ast  + T_{P_M^\perp b}^\ast$. Now, $T_{P_M b}^{\ast}$ is of finite rank, and therefore
compact. Moreover, a similar argument as before shows that $m( \{ x \in \mathbb R^{d} : |T_{P_M^\perp b}^{\ast}f(x)| > \lambda\} )$ can be made smaller than $\epsilon/\lambda$ by choosing $M$ large. This proves compactness of $T_b^\ast $.\qed

\section*{Acknowledgements}

We extend our most sincere gratitude to
\AA got Holand Olsen
for arranging an enjoyable and productive stay in Harstad, Norway, where the seeds of this project first sprouted.  
We also acknowledge Fernando Cobos, Michael Lacey and Xavier Tolsa for helpful comments which improved the final version of our work. 
Last, the second author would also like to express his appreciation to 
\includegraphics[scale=0.34]{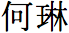}
and George Janbing Lee for their invaluable help.

We also thank the support received from the Center for Mathematical Sciences at Lund University, Sweden, and by 
\includegraphics[scale=0.34]{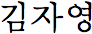}
in Sunnyvale, USA, the places where this research was 
successfully conducted.

\bibliographystyle{plain}

\end{document}